\newcommand{\be}{\begin{equation}}
\newcommand{\ee}{\end{equation}}
\newcommand{\bc}{\begin{center}}
\newcommand{\ec}{\end{center}}
\numberwithin{equation}{section}
\newtheorem{Definition}{Definition}[section]
\newtheorem{theorem}[Definition]{Theorem}
\newtheorem{definition}[Definition]{Definition}
\newtheorem{lemma}[Definition]{Lemma}
\newtheorem{corollary}[Definition]{Corollary}
\newcommand{\lc}{\mathcal{L}}
\newcommand{\rc}{\mathcal{R}}
\newcommand{\hc}{\mathcal{H}}
\newcommand{\jc}{\mathcal{J}}
\newcommand{\dc}{\mathcal{D}}
\newcommand{\lp}{\mathcal{L}^+}
\newcommand{\rp}{\mathcal{R}^+}
\newcommand{\hp}{\mathcal{H}^+}
\newcommand{\dpp}{\mathcal{D}^+}
\newcommand{\jp}{\mathcal{J}^+}
\newcommand{\ld}{\mathcal{L}^{\bullet}}
\newcommand{\rd}{\mathcal{R}^{\bullet}}
\newcommand{\hd}{\mathcal{H}^{\bullet}}
\newcommand{\dd}{\mathcal{D}^{\bullet}}
\newcommand{\jd}{\mathcal{J}^{\bullet}}
\newcommand{\lzd}{\mathbf{LZ}^{\bullet}}
\newcommand{\rzd}{\mathbf{RZ}^{\bullet}}
\newcommand{\vd}{\mathbf{D}}
\newcommand{\vn}{\mathbf{N}}
\newcommand{\vln}{\mathbf{LN}}
\newcommand{\vrn}{\mathbf{RN}}
\newcommand{\bi}{\mathbf{BI}}
\newcommand{\lbi}{\mathbf{LBI}}
\newcommand{\rbi}{\mathbf{RBI}}
\newcommand{\lqbi}{\mathbf{LQBI}}
\newcommand{\rqbi}{\mathbf{RQBI}}
\begin{document}
\title{\Large \bf On band orthorings}
\author{A. K. Bhuniya$^1$ and R. Debnath$^2$}
\date{}
\maketitle

\begin{center}
{\small\it $^1$ Department of Mathematics, Visva-Bharati, Santiniketan-731235, India.} \\
{\small\it $^2$ Department of Mathematics, Kurseong college, Kurseong-734203, India.}\\
{\small e-mail: anjankbhuniya@gmail.com$^1$; rajib.d6@gmail.com$^2$}
\end{center}

\begin{abstract}
A semiring $S$ which is a union of rings is called completely regular, if moreover, it is orthodox then $S$ is called an orthoring. Here we study the orthorings $S$ such that $E^+(S)$ is a band semiring. Every band semiring is a spined product of a left band semiring and a right band semiring with respect to a distributive lattice. A similar spined product decomposition for the band orthorings have been proved. The interval $[\mathbf{Ri}, \mathbf{BOR}]$ is lattice isomorphic to the lattice $\mathcal{L}(\mathbf{BI})$ of all varieties of band semirings, where $\mathbf{Ri}$ and $\mathbf{BOR}$ are the varieties of all rings and band orthorings, respectively.
\end{abstract}

\noindent
{\bf Keywords}: Ring, semiring, idempotent semiring, band semiring, completely regular.
\\{\it 2000 Mathematics Subject Classification} : 16Y60

\section{Introduction}
A semigroup is called completely regular if it is a union of groups, and Clifford if it is a semilattice of groups. Completely regular semigroups have been studied extensively. For an account of the theory of completely regular semigroups we refer to \cite{howie} and \cite{PR vol I}. Huge development of the theory of the completely regular semigroups has made the researchers interested to study the semirings which are union of rings.

Bandelt and Petrich \cite{banpet} studied the semirings which are subdirect products of a ring and a distributive lattice. Also they introduced a construction of strong distributive lattices of rings. In this construction, we need two families of semiring homomorphisms to define two binary operations addition and multiplication in semiring. Ghosh \cite{gho} refined this construction of strong distributive lattices of rings and he showed that only one family of homomorphisms is sufficient to define both the binary operations in a semiring. He continued to further characterizations of the semirings which are subdirect products of a ring and a distributive lattice. Also he studied the Clifford semirings as strong distributive lattices of rings.

In \cite{pastijn-guo}, Pastijn and Guo characterized the semirings which are unions of rings. Let us call such semirings completely regular. If $(S, +, \cdot)$ is a completely regular semiring, then every additive idempotent of $S$ is a multiplicative idempotent. As a consequence, it follows that if $S$ is an orthodox completely regular semiring then $E^+(S)$ [set of all additive idempotents of $S$] is an idempotent semiring. They proved that the the interval $[\mathbf{Ri}, \mathbf{O}]$ is isomorphic to the lattice $\mathcal{L}(\mathbf{I})$ of all varieties of idempotent semirings, where $\mathbf{Ri}$ and $\mathbf{O}$ are the varieties of all rings and orthodox completely regular semirings, respectively.

This lattice isomorphism as well as the inspiring success achieved in characterizing orthogroups by their bands of idempotents inspired us to characterize the orthodox completely regular semirings $S$ such that the idempotent semiring $E^+(S)$ is in a given variety of idempotent semirings. In this article we study the band orthorings, equivalently orthodox completely regular semirings $S$ such that $E^+(S)$ is a band semiring, an important class of idempotent semirings $I$ such that $\mathcal{D}^+$ is the least distributive lattice congruence on $I$ \cite{guo shum sen}.

Also we wanted to characterize the variety of all completely regular semirings as an equational class. In Section 2, we find a set of five equations that defines completely regular semirings.

In Section 3, we have characterized the idempotent semirings $I$ such that $\mathcal{L}^+$ [$\mathcal{R}^+$] is the least distributive lattice congruence on $I$ which we call the left [right] band semirings. Every band semiring is a spined product of a left band semiring and a right band semiring with respect to a distributive lattice.

In Section 4, a similar spined product decomposition for the band orthorings has been established. Every band orthoring is a spined product of a left band orthoring and a right band orthoring with respect to a distributive lattice of rings. Also we have characterized distributive lattice decompositions of the [left, right] band orthorings.

In Section 5, we establish a lattice isomorphism between the lattice $\mathcal{L}(\mathbf{BI})$ of all varieties of band semirings and the interval $[\mathbf{Ri}, \mathbf{BOR}]$, where $\mathbf{BOR}$ is the variety of all band orthorings.

\section{Preliminaries and basic results}
A semiring $(S,+,\cdot)$ is an algebra with two binary operations $+$ and $\cdot$ such that the reducts $(S,+)$ and $(S,\cdot)$ are semigroups and in which the two distributive laws
$$
x(y+z)=xy+xz, \quad (x+y)z=xz+yz
$$
are satisfied. If $e \in S$ is such that $e+e=e$, then $e$ is called an additive idempotent. We denote the set of all additive idempotents of $S$ by $E^+(S)$. For a semiring $(S, +, \cdot)$ the Green relations $\lc, \rc, \hc, \jc$ and $\dc$ on the additive [multiplicative] reduct $(S, +)[(S, \cdot)]$  will be denoted by $\lp, \rp, \hp, \jp$ and $\dpp [\ld, \rd, \hd, \jd, \dd]$. We refer to \cite{PR vol I} and \cite{howie} for the information concerning semigroup theory, \cite{Golan} for background on semirings and ~\cite{McKenzie} for notions concerning universal algebra and lattice theory.

A semiring $(S, +, \cdot)$ is called an idempotent semiring if both the additive reduct $(S, +)$ and the multiplicative reduct $(S, \cdot)$ are bands. Thus the class of all idempotent semirings can be regarded as a variety of semirings satisfying two additional identities:
\begin{align*}
x+x = x \; \textrm{and} \; x \cdot x = x.
\end{align*}
We denote the variety of all idempotent semirings by $\mathbf{I}$. There are many articles considering variety of idempotent semirings, for example, see \cite{PZ2000}, \cite{SGS}, \cite{Wang}, \cite{ZGS} and \cite{ZSG}.

On an idempotent semiring $(S, +, \cdot)$ one may introduce the relations $\leq^{l}_{+}, \, \leq^{l}_{\cdot}, \leq^{r}_{+}, \, \leq^{r}_{\cdot}$ and $\leq_{+}, \, \leq_{\cdot}$ by the following: for $a, b \in S$,
\begin{align*}
& a \mathrel\leq^{l}_{+} b \hspace{.5cm}  \Leftrightarrow \hspace{.5cm} b=a+b; \hspace{1cm}
a \mathrel\leq^{l}_{\cdot} b \hspace{.5cm} \Leftrightarrow \hspace{.5cm} a=ba; \\
& a \mathrel\leq^{r}_{+} b \hspace{.5cm}  \Leftrightarrow \hspace{.5cm} b=b+a; \hspace{1cm}
a \mathrel\leq^{r}_{\cdot} b \hspace{.5cm} \Leftrightarrow \hspace{.5cm} a=ab; \\
& \hspace{1.4cm} \leq_{+} = \leq^{l}_{+} \cap \leq^{r}_{+} \hspace{.5cm} \textrm{and} \hspace{.5cm}
\leq_{\cdot} = \leq^{l}_{\cdot} \cap \leq^{r}_{\cdot}.
\end{align*}
The relations $\leq^{l}_{+}, \, \leq^{l}_{\cdot}, \leq^{r}_{+}, \, \leq^{r}_{\cdot}$ are quasi-orders and the relations $\leq_{+}$ and $\leq_{\cdot}$ are partial orders~\cite{nambooripad}.

If $(S, +, \cdot)$ is an idempotent semiring, then $\dpp$ and $\dd$ are given by: for $a, b \in S$,
\begin{align*}
& a \mathrel\dpp b \, \Leftrightarrow \, a+b+a=a, \ b+a+b=b; \\
& a \mathrel\dd b \, \Leftrightarrow \, aba=a, \ bab=b
\end{align*}
while $\rp$ and $\rd$ are defined by
\begin{align*}
& a \mathrel\rp b \, \Leftrightarrow \, a+b=b, \ b+a=a; \\
& a \mathrel\rd b \, \Leftrightarrow \, ab=b, \ ba=a
\end{align*}
and $\lp, \ld$ are defined dually.

Let $S$ be a semiring which is a union of rings. Then $(S, +)$ is a completely regular semigroup and hence can be regarded as a $(2, 1)$ algebra $(S, +, ')$ which satisfies:
\begin{align}
x = x+x'+x; \\
x+x' = x'+x; \\
(x')' = x.
\end{align}

Also by Theorem 1.3 \cite{pastijn-guo}, $\hp$ is the least idempotent semiring congruence on $S$. Thus $(S, +, ')$ is in fact a band of abelian groups. We refer to \cite{pastijn-trotter} for the properties of band of abelian groups. Then for all $x, y \in S$, $x+y^o \mathrel\hp x^o+y$, where $x^o=x+x'$ implies that
\begin{equation}
x+y^o+x^o+y=x^o+y+x+y^o.
\end{equation}

If $S=\cup R_{\alpha}$ and $x \in R_{\alpha}$ then $x^o$ is the zero element of the ring $R_{\alpha}$ \cite{SMScr}; and hence
\begin{equation}
xx^o=x^o.
\end{equation}

Now we show that the equations $(2.1) - (2.5)$ forms an equational basis for the variety of all semirings which are union of rings.
\begin{theorem}                                                  \label{hpcong}
Let $S$ be a semiring. Then the following conditions are equivalent:
\\ (i) \ $S$ is a union of rings;
\\ (ii) \ $S$ satisfies equations $(2.1) - (2.5)$;
\\ (iii) \ $\hp$ is an idempotent semiring congruence on $S$ and each $\hp$-class is a ring;
\\ (iv) \ $S$ is an idempotent semiring of rings.
\end{theorem}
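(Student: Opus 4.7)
The strategy is the cycle (i) $\Rightarrow$ (ii) $\Rightarrow$ (iii) $\Rightarrow$ (iv) $\Rightarrow$ (i).

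For (i) $\Rightarrow$ (ii), this is a verification, and all four non-trivial equations have in fact already been derived in the paragraphs preceding the theorem: (2.1)--(2.3) encode that $(S,+)$ is completely regular as a $(2,1)$-algebra, with $x'$ the additive inverse of $x$ inside the ring that contains it; (2.4) expresses commutativity inside the abelian $\hp$-class to which both $x+y^o$ and $x^o+y$ belong; and (2.5) says that $x^o$ is the additive zero of the ring containing $x$.

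For (ii) $\Rightarrow$ (iii) I would proceed in three steps. First, (2.1)--(2.3) are precisely the equations defining a completely regular semigroup as a $(2,1)$-algebra, so $(S,+) = \bigcup_{e \in E^+(S)} H_e$ where $H_e = \{x \in S : x^o = e\}$ is the maximal subgroup at $e$. Second, I would specialise (2.4) to show each $H_e$ is abelian: for $a,b \in H_e$ one has $a^o = b^o = e$, and putting $x = b$, $y = a$ in (2.4) collapses, via $b+e = b$ and $e+a=a$, to $b+a = a+b$. Standard theory of bands of abelian groups then gives $(x+y)^o = x^o + y^o$, so $\hp$ is an additive congruence. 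Third, I would show $\hp$ is a multiplicative congruence by establishing $(xz)^o = x^o z^o$ and symmetrically $(zx)^o = z^o x^o$: using (2.5) with distributivity, one expands products such as $xz\cdot(x^o z^o)$ by rewriting $x\cdot x^o = x^o$ inside the expressions, and re-assembles using the abelian structure of the $\hp$-classes. Once $\hp$ is a semiring congruence, each class is an abelian additive group on which multiplication distributes, hence a ring, while $S/\hp$ is both additively and multiplicatively idempotent.

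For (iii) $\Rightarrow$ (iv), the congruence $\hp$ exhibits $S$ as an idempotent semiring of rings by definition, while (iv) $\Rightarrow$ (i) is immediate since an idempotent semiring of rings is manifestly a disjoint union of rings.

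The main obstacle is the third sub-step of (ii) $\Rightarrow$ (iii), namely showing that $\hp$ is a multiplicative congruence. The delicate point is that (2.5) controls only the product $x\cdot x^o$ of an element with its \emph{own} idempotent, not with a foreign idempotent like $z^o$. Extracting $(xz)^o = x^o z^o$ therefore forces one to write out expressions such as $xz + x^o z^o$ and rewrite via the distributive laws so that $x^o = x + x'$ and $z^o = z + z'$ can be fed into (2.5), then use the band-of-abelian-groups structure produced in step two to rearrange additive summands. This is where (2.4), (2.5), and the two distributivities must be used in concert.
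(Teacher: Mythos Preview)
The paper's proof is substantially different and much shorter. It does not attempt the cycle you propose: it imports (i)$\Leftrightarrow$(iii)$\Leftrightarrow$(iv) wholesale from Theorem~1.3 of \cite{pastijn-guo}, records (i)$\Rightarrow$(ii) from the discussion preceding the theorem, and proves only (ii)$\Rightarrow$(i). For that last implication it cites Theorem~3.6 of \cite{SMScr} to conclude from (2.1)--(2.3) together with (2.5) that every $\hp$-class is already a subsemiring whose additive reduct is a group; the identity (2.4) is then used only to extract additive commutativity within a single $\hp$-class, via
\[
x+y \;=\; x+x^o+y^o+y \;=\; x+y^o+x^o+y \;=\; x^o+y+x+y^o \;=\; y+x.
\]
In particular, the paper never argues directly from the identities that $\hp$ is a congruence on all of $S$; that fact is obtained only after (i) is established, by quoting Pastijn--Guo.

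Your self-contained plan is reasonable in outline, but there is a gap you did not flag, in step~2 of (ii)$\Rightarrow$(iii). Showing that each $H_e$ is abelian does \emph{not} by itself place $(S,+)$ in the variety of bands of abelian groups: that phrase means precisely that $\hp$ is a congruence with abelian group classes, which is exactly what you want to conclude. So the appeal to ``standard theory of bands of abelian groups'' to obtain $(x+y)^o=x^o+y^o$ is circular unless you supply a separate argument (or a citation, e.g.\ to \cite{pastijn-trotter}) that (2.1)--(2.4) already axiomatize that variety of $(2,1)$-semigroups. The obstacle you \emph{do} flag in step~3 is genuine; the paper's way around both difficulties is to bring the multiplicative identity (2.5) and the distributive laws into play earlier, via the cited semiring result of \cite{SMScr}, rather than first settling the purely additive question and only afterwards turning to multiplication.
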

\begin{proof}
Equivalence of (i), (iii) and (iv) follows from Theorem 1.3 \cite{pastijn-guo}. $\bf {(i) \Rightarrow (ii)}$ follows from the above discussions.
\\{\bf (ii) $\Rightarrow$ (i):}  Let $H$ be an $\hp$-class in $S$. Since $S$ satisfies equations (2.1), (2.2), (2.3) and (2.5), it follows by Theorem 3.6 \cite{SMScr}, that $H$ is a subsemiring of $S$ such that $(H, +)$ is a group. Now for $x, y \in H$, $x^o=y^o$ implies by $(2.4)$ that $x+y=x+x^o+y^o+y=x+y^o+x^o+y=x^o+y+x+y^o=y+x$; and hence every $\hp$-class is a ring.
\end{proof}

\begin{definition}
\begin{enumerate}
\item[(i)]
A semiring $S$ is said to be completely regular if it satisfies either of the four equivalent conditions in the above theorem.
\item[(ii)]
A completely regular semiring $S$ is called an orthoring if $E^+(S)$ is a subsemiring of $S$.
\end{enumerate}
\end{definition}

Readers should note that Sen, Maity and Shum \cite{SMScr} used the phrase `completely regular semiring' to mean union of skew-rings (rings without additive commutative). Also there are many other articles \cite{guo shum sen}, \cite{SMSclifford} etc. following this convention.

Since every additive idempotent of a completely regular semiring $S$ is also a multiplicative idempotent \cite{pastijn-guo}, if $S$ is an orthoring then the subsemiring $E^+(S)$ is an idempotent semiring. We call an idempotent semiring $S$ a \emph{rectangular} [\emph{left zero, right zero}] idempotent semiring if the additive reduct $(S, +)$ is a rectangular [left zero, right zero] band. By a \emph{b-lattice} we mean an idempotent semiring with an additive commutative reduct. The Green's relation $\dpp$ is a congruence on every idempotent semiring $S$, and hence every idempotent semiring is a b-lattice of rectangular idempotent semirings.

We call a semiring a \emph{rectangular ring} if it is a direct product of a rectangular idempotent semiring and a ring. Similarly we define a \emph{left ring} and a \emph{right ring}. We left it to the readers to check that a semiring $S$ is a rectangular [left, right] ring if and only if it is completely regular and the additive reduct $(S, +)$ of $S$ is a rectangular [left, right] group.

We omit the proof of the following result, since it is similar to that of Theorem 4.5 \cite{debnath-maity-bhuniya}.
\begin{theorem}
Let $S$ be a completely regular semiring. Then $S$ is an orthoring if and only if it is a b-lattice of rectangular rings.
\end{theorem}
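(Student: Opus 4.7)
The plan is to prove both directions using the structure of $E^+(S)$ and the characterization of rectangular rings recalled just before the theorem (a semiring is a rectangular ring iff it is completely regular with rectangular additive reduct).

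For $(\Rightarrow)$, assume $S$ is an orthoring. Then $E^+(S)$ is a subsemiring of $S$, and since every additive idempotent is also a multiplicative idempotent, it is an idempotent semiring. The first step is to show that the map $\phi: S \to E^+(S)$ defined by $\phi(x)=x^o$ (with $x^o=x+x'$) is a surjective semiring homomorphism. Additively, since $(S,+)$ is a band of abelian groups with band of idempotents $E^+(S)$, we have $x+y \mathrel\hp x^o+y^o$; and as $x^o+y^o$ is already an additive idempotent (by the orthoring hypothesis), we conclude $(x+y)^o = x^o+y^o$. Multiplicatively, Theorem~\ref{hpcong} shows that $\hp$ is an idempotent semiring congruence, hence also a multiplicative congruence, so $xy \mathrel\hp x^o y^o$; since $x^o y^o \in E^+(S)$, we get $(xy)^o = x^o y^o$. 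Surjectivity is clear because $\phi$ restricts to the identity on $E^+(S)$. I would then pull back the b-lattice congruence $\dpp$ of the idempotent semiring $E^+(S)$ along $\phi$ to obtain a semiring congruence $\sigma$ on $S$. Each $\sigma$-class $T$ is a union of $\hp$-classes (rings), and its set of additive identities is a single $\dpp$-class of $E^+(S)$, i.e.\ a rectangular idempotent semiring. Thus $(T,+)$ is a rectangular band of abelian groups, which is a rectangular group, and by the characterization recalled earlier $T$ is a rectangular ring. Since $S/\sigma \cong E^+(S)/\dpp$ is a b-lattice, $S$ is a b-lattice of rectangular rings.

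For $(\Leftarrow)$, suppose $S$ is a b-lattice $Y$ of rectangular rings $\{R_\alpha\}_{\alpha \in Y}$. Each $R_\alpha$ is a union of rings (its $\hp$-classes), so $S$ is completely regular. Multiplicative closure of $E^+(S)$ is automatic: for $e,f \in E^+(S)$, distributivity gives $ef+ef = (e+e)f = ef$, so $ef \in E^+(S)$. Additive closure follows from the additive picture: each $(R_\alpha,+)$ is a rectangular group, hence $(S,+)$ is a semilattice of rectangular groups (an orthogroup), and the idempotents of an orthogroup form a subband. Therefore $E^+(S)$ is a subsemiring of $S$ and $S$ is an orthoring.

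The main obstacle, in my view, is verifying that $\phi:x\mapsto x^o$ is multiplicatively compatible in the forward direction; this is precisely where the orthoring hypothesis shows its strength, via Theorem~\ref{hpcong} which makes $\hp$ a semiring (not merely additive) congruence on a completely regular semiring. Once $\phi$ is confirmed a homomorphism, the rest is a routine pullback argument combined with the rectangular-ring characterization.
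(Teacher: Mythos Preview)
Your argument is correct in both directions. Note, however, that the paper itself omits the proof of this theorem, deferring to Theorem~4.5 of \cite{debnath-maity-bhuniya}, so there is no in-paper proof to compare against line by line. Judging by the style of the analogous Theorem~4.2 (band orthorings as distributive lattices of rectangular rings), the intended approach is to work directly with the Green congruence $\mathcal{J}^{+}$ (here $\mathcal{D}^{+}$) on $S$ and verify by hand that each class is a rectangular ring. Your route is a clean variant of the same idea: you factor through the retraction $\phi:S\to E^{+}(S)$, $x\mapsto x^{o}$, and then pull back $\dpp$ from $E^{+}(S)$. Since the kernel of $\phi$ is exactly $\hp$, your $\sigma$ coincides with $\dpp$ on $S$, so the two approaches produce the same decomposition; yours just makes the role of $E^{+}(S)$ more explicit.

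One small remark on your closing commentary: the orthoring hypothesis is \emph{not} what drives the multiplicative compatibility of $\phi$. The fact that $\hp$ is a semiring congruence (Theorem~\ref{hpcong}) holds for every completely regular semiring, and $x^{o}y^{o}\in E^{+}(S)$ follows from distributivity alone, since $x^{o}y^{o}+x^{o}y^{o}=(x^{o}+x^{o})y^{o}=x^{o}y^{o}$. The place where the orthoring hypothesis is genuinely needed is the \emph{additive} step: without knowing that $x^{o}+y^{o}$ is already an additive idempotent, you cannot conclude $(x+y)^{o}=x^{o}+y^{o}$. This does not affect the validity of your proof, only the diagnosis of where the hypothesis enters.
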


Clearly this result is stemmed out from the semilattice decomposition of every band into rectangular bands. Since the variety $\mathbf{D}$ of all distributive lattices is a subvariety of the variety $\mathbf{Sl}^+$ of all b-lattices, it is natural to ask for the characterization of the semirings which are distributive lattices of rectangular rings.

Our experience in semigroups tells us to characterize first the idempotent semirings which are distributive lattices of rectangular idempotent semirings. In the following section we characterize such idempotent semirings.

\section{Band semirings}
Let $S$ be an idempotent semiring. Then the additive reduct $(S, +)$ is a band and hence $\mathcal{D}^+$ is the least semilattice congruence on $(S, +)$. If $\eta$ is the least distributive lattice congruence on $S$, then $(S/\eta, +)$ is a semilattice and hence $\mathcal{D}^+ \subseteq \eta$.

Sen, Guo and Shum \cite{guo shum sen} proved the following equivalent conditions:
\begin{lemma}
Let $S$ be an idempotent semiring. Then the following conditions are equivalent:
\begin{itemize}
\item[(i)]
$\dpp$ is the least distributive lattice congruence on $S$;
\item[(ii)]
$S$ is a distributive lattice of rectangular idempotent semirings;
\item[(iii)]
$S$ satisfies the following two identities:
\begin{center}
$x+xy+x = x$ and $x+yx+x = x$.
\end{center}
\end{itemize}
\end{lemma}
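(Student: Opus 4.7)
The plan is to prove the cycle (ii) $\Rightarrow$ (iii) $\Rightarrow$ (i) $\Rightarrow$ (ii), with the bulk of the work in the middle implication.

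For (ii) $\Rightarrow$ (iii), I would start from a decomposition $S = \bigcup_{\alpha \in D} S_\alpha$ as a distributive lattice $D$ of rectangular idempotent semirings. For $x \in S_\alpha$, $y \in S_\beta$ we have $xy \in S_{\alpha\beta}$, so $x + xy + x \in S_{\alpha + \alpha\beta + \alpha} = S_\alpha$ by lattice absorption in $D$. Inside the rectangular band $(S_\alpha, +)$, the identity $u + v + u = u$ applied with $u = x$, $v = xy$ gives $x + xy + x = x$; the other identity is symmetric. Conversely, (i) $\Rightarrow$ (ii) is routine: each $\dpp$-class is a rectangular band under $+$ and, since the quotient lattice satisfies $\alpha \cdot \alpha = \alpha$, is closed under $\cdot$, so $S$ is the required distributive lattice of rectangular idempotent subsemirings.

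For (iii) $\Rightarrow$ (i), I would use the remark stated just before the lemma: $\dpp$ is a congruence on every idempotent semiring and $S/\dpp$ is a b-lattice. The task is to upgrade this b-lattice to a distributive lattice and verify minimality. Since $+$ on $S/\dpp$ is commutative and idempotent, each hypothesis identity collapses to an absorption law: $a + ab = a$ and $a + ba = a$; substituting $b$ for $x$ and $a$ for $y$ additionally yields $b + ba = b$ and $b + ab = b$. Thus both $ab$ and $ba$ lie below $a$ and $b$ in the semilattice order of $(S/\dpp, +)$.

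The principal obstacle is commutativity of $\cdot$ on $S/\dpp$, for which I plan a meet-uniqueness trick. The claim is that $ab$ is the greatest lower bound of $\{a, b\}$: it is a lower bound by the previous paragraph, and for any $c$ with $a + c = a$ and $b + c = b$, semiring distributivity yields
\[
ab \;=\; (a+c)(b+c) \;=\; ab + ac + cb + c,
\]
so $c \leq ab$. The symmetric expansion of $(b+c)(a+c)$ shows that $ba$ is also the greatest lower bound of $\{a, b\}$, so $ab = ba$. Together with the absorption laws, the idempotence of $+$ and $\cdot$, and the semiring distributive laws, $(S/\dpp, +, \cdot)$ is a distributive lattice. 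Minimality is then automatic: if $\rho$ is any distributive lattice congruence on $S$ and $a \dpp b$, then $a + b + a = a$ forces $[a] \vee [b] = [a]$ in $S/\rho$, and $b + a + b = b$ forces $[a] \vee [b] = [b]$, so $[a] = [b]$ and $\dpp \subseteq \rho$.
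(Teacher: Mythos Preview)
Your argument is correct. Note, however, that in the paper this lemma is merely quoted from Sen, Guo and Shum without proof; the only part the paper re-proves is the equivalence (i)~$\Leftrightarrow$~(iii), later in Theorem~\ref{dpp least}, and it does so by a genuinely different route. There the implication (iii)~$\Rightarrow$~(i) is obtained via the auxiliary relations $\sigma$ and $\eta$ of \cite{sen-bhuniya-debnath}: knowing that $\eta$ (the transitive closure of $\sigma$) is the least distributive lattice congruence on any idempotent semiring, the paper shows $\dpp\subseteq\eta$ and $\sigma\subseteq\dpp$, whence $\eta=\dpp$. Your proof instead works directly in the b-lattice $S/\dpp$, using the absorption identities and the greatest-lower-bound computation $(a+c)(b+c)=ab+ac+cb+c$ to force $ab=ba$ and hence upgrade the b-lattice to a distributive lattice; minimality then drops out from $\dpp\subseteq\eta$, which you verify by hand. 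Your approach is more self-contained and avoids importing the $\sigma/\eta$ machinery; the paper's approach, by contrast, is tailored so that the same template later yields the analogous characterizations for $\lp$ and $\rp$. You also supply the (ii)~$\Rightarrow$~(iii) and (i)~$\Rightarrow$~(ii) steps, which the paper does not spell out at all.
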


\begin{definition}
An idempotent semiring is said to be a band semiring if it satisfies either of the equivalent conditions of the above lemma.
\end{definition}

\newpage
In this article, we consider the following varieties of idempotent semirings.

\begin{center}
\begin{table}

\caption{Certain subvarieties of idempotent semirings}
\begin{tabular}{ll}
  Notation & Defining identity within $\mathbf{I}$ \\
  \hline
  $\mathbf{Sl}^+$ & $xy+yx = yx+xy$,\\
  $\mathbf{R}^+$ & $x+y+x = x$,\\
  $\mathbf{R}^{\bullet}$ & $xyx = x$,\\
  $\mathbf{LZ}^+$ & $x+y = y$,\\
  $\mathbf{RZ}^+$ & $x+y = x$,\\
  $\mathbf{LZ}^{\bullet}$ & $xy = y$,\\
  $\mathbf{RZ}^{\bullet}$ & $xy = x$,\\
  $\mathbf{BI}$ & $x+xy+x = x , x+yx+x = x$,\\
  $\mathbf{LQBI}$ & $x+xy+x = x$,\\
  $\mathbf{RQBI}$ & $x+yx+x = x$,\\
  $\mathbf{N}$ & $x+xyx+x = x$,\\
  $\mathbf{LN}$ & $x+xyx = x$,\\
  $\mathbf{RN}$ & $xyx+x = x.$
  \end{tabular}
\end{table}
\end{center}

We call an idempotent semiring in $\mathbf{LQBI}$ and $\mathbf{RQBI}$ a \emph{left quasi band semiring} and a \emph{right quasi band semirings}, respectively. The idempotent semirings in $\mathbf{Sl}^+$ is referred as \emph{b-lattice}. If the additive reduct $(B, +)$ of a band semirings $B$ is commutative, then the multiplicative reduct $(B, \cdot)$ is also commutative \cite{guo shum sen}. Thus $\mathbf{BI}\cap \mathbf{Sl}^+=\mathbf{D}$, the variety of distributive lattices.

\begin{lemma}
An idempotent semiring $S \in \vn$ if and only if it satisfies the identity:
\begin{equation}
xz+xyz+xz = xz.
\end{equation}
\end{lemma}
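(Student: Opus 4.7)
The $(\Leftarrow)$ direction is a one-line substitution: putting $z = x$ in $xz + xyz + xz = xz$ and using $xx = x$ recovers $x + xyx + x = x$, placing $S$ in $\vn$.

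For the $(\Rightarrow)$ direction, assume $x + xyx + x = x$. The plan is to derive the auxiliary identity
\[
xz + xyxz + xyz + xzyxz + xzyz + xz = xz \qquad (\ast)
\]
and then to extract the desired identity from $(\ast)$ using the Green's relation $\dpp$. To obtain $(\ast)$, substitute $x \to x + z$ in $(N)$ to get $(x+z) + (x+z)y(x+z) + (x+z) = x+z$, and then multiply on the left by $x$ and on the right by $z$. By distributivity together with $xx = x$ and $zz = z$, the outer terms collapse to $x(x+z)z = xz + xz = xz$, while the expansion $(x+z)y(x+z) = xyx + xyz + zyx + zyz$ gives $x\bigl[(x+z)y(x+z)\bigr]z = xyxz + xyz + xzyxz + xzyz$, yielding $(\ast)$.

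To extract the conclusion from $(\ast)$, I invoke the standard fact from band theory that $\dpp$ is the least semilattice congruence on the additive band $(S,+)$, and for any $u, a \in S$ the equation $u + a + u = u$ holds if and only if $[a] \leq [u]$ in the semilattice $S/\dpp$. Since $\dpp$ is a congruence for $+$, we have $[a+b] = [a] \vee [b]$. Applied to $(\ast)$ with $u = xz$, this gives $[xyxz] \vee [xyz] \vee [xzyxz] \vee [xzyz] \leq [xz]$ in $S/\dpp$; in particular $[xyz] \leq [xz]$, which translates back to $xz + xyz + xz = xz$.

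The main obstacle is identifying the right substitution. The choice $x \to x + z$, combined with outer left-multiplication by $x$ and right-multiplication by $z$, is engineered precisely so that distributivity produces $xyz$ as one of four cross-terms while the outer $x(x+z)z$ collapses to a single $xz$. The three other cross-terms ($xyxz$, $xzyxz$, $xzyz$) sit harmlessly inside the sandwich $xz + (\cdots) + xz = xz$, and the $\dpp$-join step decouples the $[xyz]$ component from the rest to deliver the target identity.
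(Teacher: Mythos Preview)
Your argument is correct. The $(\Leftarrow)$ direction matches the paper's. For $(\Rightarrow)$ your route genuinely differs: the paper stays entirely within equational manipulation, first obtaining $xz = xz + xyzxz + xz$ by substituting $y \to yz$ into the $\vn$ identity and right-multiplying by $z$, then expanding the inner $z$ via $z = z + zxyz + z$, and finally adding a suitable tail to both sides to strip the calculation down to $xz + xyz + xz = xz$. You instead manufacture the longer identity $(\ast)$ in one stroke via the substitution $x \mapsto x+z$ followed by outer left- and right-multiplication, and then pass to the semilattice quotient $(S,+)/\dpp$ to peel off the single summand $xyz$. The paper's method is purely syntactic, exhibiting the target identity as a formal consequence of the $\vn$ axiom; your method is more structural, and the $\dpp$-order extraction step is a clean, reusable device that avoids any further equational bookkeeping once $(\ast)$ is in hand.
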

\begin{proof}
First assume that $S \in \vn$. Then $xz = (x+xyzx+x)z = xz+xyzxz+xz = xz+xyzx(z+zxyz+z)+xz = xz+xyzxz+xyz+xyzxz+xz$ implies that $xz+xyz+xyzxz+xz = xz$ [add $xyz+xyzxz+xz$ to both sides from right]. Similarly this implies that $xz = xz+xyz+xz$.

Converse follows directly.
\end{proof}

Let $S$ be an idempotent semiring. Define a binary relation $\sigma$ on $S$ by: for $a, b \in S$,
\begin{center}
$a \mathrel\sigma b$ if and only if $aba=aba+a+aba$ and $bab=bab+b+bab$.
\end{center}

Sen, Bhuniya and Debnath \cite{sen-bhuniya-debnath} proved that $\sigma$ is the least distributive lattice congruence on every idempotent semiring $S\in \vn$ and the transitive closure of $\sigma$, denote by $\eta$ and given by:
\begin{align*}
a \mathrel\eta b  & \ \ \textrm{if and only if there exists} \ x \in S \ \textrm{such that} \\
            & \hspace{2cm} axbxa=axbxa+a+axbxa \ \textrm{and} \ bxaxb=bxaxb+b+bxaxb
\end{align*}
is the least distributive distributive congruence on every idempotent semiring $S$.

The following equivalent characterization of band semirings is useful.
\begin{lemma}                                                             \label{bi=x+yxy+x}
Let $S$ be an idempotent semiring. Then the following conditions are equivalent.
\begin{enumerate}
\item \vspace{-.3cm}
$S \in \bi$;
\item \vspace{-.4cm}
$S$ satisfies the identity:
\begin{equation}                                              \label{identity for dpp}
x+yxy+x = x.
\end{equation}
\end{enumerate}
\end{lemma}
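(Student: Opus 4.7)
The plan is to prove the two directions separately. For the implication (2) $\Rightarrow$ (1) a clean substitution in the given identity suffices, while for (1) $\Rightarrow$ (2) I would descend to the quotient $S/\dpp$ using the Sen--Guo--Shum lemma recalled earlier in this section.

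For the reverse direction, I substitute $y \mapsto xy$ in the hypothesis $x + yxy + x = x$. The middle summand $(xy)\cdot x\cdot(xy)$ collapses by associativity together with the band identities $x\cdot x = x$ and $(xy)(xy) = xy$ (every element of an idempotent semiring is multiplicatively idempotent): indeed $(xy)\,x\,(xy) = (xy)(x\cdot xy) = (xy)(xy) = xy$. This yields $x + xy + x = x$. The symmetric substitution $y \mapsto yx$, using $(yx)(yx) = yx$, similarly delivers $x + yx + x = x$. Hence $S$ satisfies both defining identities of $\bi$.

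For the forward direction, since $S \in \bi$ the Sen--Guo--Shum lemma tells us that $\dpp$ is a distributive lattice congruence on $S$, so $S/\dpp$ is a distributive lattice. In that quotient multiplication is commutative and idempotent, so $[y][x][y] = [x][y]$, and lattice absorption gives
\[
[x + yxy + x] \;=\; [x] + [x][y] + [x] \;=\; [x].
\]
Hence $x + yxy + x \mathrel{\dpp} x$. The description of $\dpp$ recorded in the preliminaries contains the clause that $a \mathrel{\dpp} b$ implies $b + a + b = b$; applying this with $b = x$ and $a = x + yxy + x$, and collapsing the adjacent $x + x$ pairs by additive idempotence, one recovers $x + yxy + x = x$ on the nose.

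The main obstacle is the forward direction: translating the lattice-level equality $[x + yxy + x] = [x]$ into an actual equality in $S$ relies on the specific description of $\dpp$ via the absorptive pair $a + b + a = a$, $b + a + b = b$. A purely equational proof from only the two identities $x + xy + x = x$ and $x + yx + x = x$ appears awkward; even the natural intermediates $yx + yxy + yx = yx$ and $xy + yxy + xy = xy$, obtained by left- and right-multiplying the defining identities by $y$, only reassemble into $x + xy + yxy + xy + x = x$ and $x + yx + yxy + yx + x = x$, rather than the desired $x + yxy + x = x$. This is why the structural passage through $\dpp$ is the natural route.
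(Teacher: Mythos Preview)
Your argument is correct. The direction $(2)\Rightarrow(1)$ is essentially the paper's: substitute $y\mapsto xy$ (resp.\ $y\mapsto yx$) in $x+yxy+x=x$ and use multiplicative idempotence of $xy$ (resp.\ $yx$) to collapse the middle term.

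For $(1)\Rightarrow(2)$ you take a genuinely different route. The paper argues purely equationally: from $a+ab+a=a$ one gets $ba+bab+ba=ba$ by left-multiplying with $b$, substitutes this into $a+ba+a=a$, and then by a short chain of additions and cancellations (using $a+ba+a=a$ repeatedly) arrives at $a+bab+a=a$. Your argument instead invokes the Sen--Guo--Shum lemma to know that $S/\dpp$ is a distributive lattice, observes that $x+yxy+x$ and $x$ have the same image there, and then reads off $x+(x+yxy+x)+x=x$ from the very description of $\dpp$, which after $x+x=x$ is the desired identity. Your route is shorter and more conceptual, at the cost of importing the external structural result; the paper's route is self-contained within the equational theory. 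In particular, your closing remark that a direct equational derivation ``appears awkward'' is too pessimistic: the paper carries one out in half a dozen lines, and it is worth seeing how, since the next theorem in the paper is meant to \emph{re-prove} the Sen--Guo--Shum equivalence using this lemma, so an independent equational proof here is what keeps that re-proof noncircular.
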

\begin{proof}
$\mathbf{(1) \Rightarrow (2):}$ Let $a, b \in S$. Then $a+ab+a=a$ and $a+ba+a=a$. This implies that $ba+bab+ba=ba$ and we get
\begin{align*}
& a+ba+a=a+ba+bab+ba+a \\
\Rightarrow \ & a=a+ba+bab+ba+a \\
\Rightarrow \ & a+bab+ba+a=a+ba+bab+ba+a \\
\Rightarrow \ & a+bab+ba+a=a \\
\Rightarrow \ & a+bab+ba+a=a+bab+a \\
\Rightarrow \ & a=a+bab+a.
\end{align*}
Thus (1) implies (2).
\\ $\mathbf{(2) \Rightarrow (1):}$ \ Assume that (2) holds. Let $a, b \in S$. Then $a+bab+a=a$. This implies that $a+abab+a=a$ and so $a+ab+a=a$. Similarly $a+ba+a=a$. Hence $S \in \bi$.
\end{proof}

Following result was proved by Sen, Guo and Shum. The characterization of the least distributive lattice congruence on an idempotent semiring as we found in \cite{sen-bhuniya-debnath}, enables us to prove the same in a generalized way [in the sense that the same technique can be used to characterize the idempotent semirings $S$ such that $\lp$ and $\rp$ are the least distributive lattice congruence on $S$]. So we would like to include here the following new proof.
\begin{theorem}~\cite{SGS} \label{dpp least}
Let $S$ be an idempotent semiring. Then the following conditions are equivalent.
\begin{enumerate}
\item \vspace{-.3cm}
$\dpp$ is the least distributive lattice congruence on $S$;
\item \vspace{-.4cm}
$S \in \bi$.
\end{enumerate}
\end{theorem}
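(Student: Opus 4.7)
The plan is to prove $(1)\Rightarrow(2)$ in a line from the absorption law and to prove $(2)\Rightarrow(1)$ by showing $\dpp$ is itself a distributive lattice congruence, then the least such.

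For $(1)\Rightarrow(2)$: if $\dpp$ is a distributive lattice congruence on $S$, then in $S/\dpp$ absorption yields $[a]_\dpp = [a + ab]_\dpp$, so $a + ab \mathrel\dpp a$. Writing out the first half of the $\dpp$-condition gives $a + (a+ab) + a = a$, which simplifies to $a + ab + a = a$; the identity $a + ba + a = a$ follows by the symmetric absorption $[a]_\dpp = [a + ba]_\dpp$. Hence $S \in \bi$.

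For $(2)\Rightarrow(1)$ I first check that $\dpp$ is a semiring congruence. Additive compatibility is automatic (it is the least semilattice congruence on the band $(S,+)$), and multiplicative compatibility follows by multiplying the defining equations $a+b+a=a$ and $b+a+b=b$ through on the left or right. Next, a direct calculation using the $\bi$-identities shows $a + ab \mathrel\dpp a$ and $a + ba \mathrel\dpp a$ in $S$: for instance $a + (a+ab) + a = a + ab + a = a$ and $(a+ab) + a + (a+ab) = a + ab + a + ab = a + ab$. The main obstacle is then to promote the quotient semiring $S/\dpp$ — which at this stage carries a semilattice addition, a band multiplication, and both absorption laws — to a distributive lattice; the only missing ingredient is commutativity of the quotient multiplication, i.e.\ $ab \mathrel\dpp ba$. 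A direct attack from the $\bi$-identities stalls (one easily obtains $ab + bab + ab = ab$ but not $bab + ab + bab = bab$), so I would settle it via a general lemma: in any semiring $T$ whose additive reduct is a semilattice, whose multiplicative reduct is a band, and which satisfies $a + ab = a = a + ba$, the element $ab$ is the infimum of $\{a,b\}$ in the additive semilattice order, whence $ab = ba$. The calculation is short: for any $c$ with $c \le a$ and $c \le b$ one has $ab = (c+a)(c+b) = c + cb + ac + ab = c + ab$ by distributivity and absorption, so $c \le ab$, identifying $ab$ as the greatest lower bound of $\{a,b\}$ and forcing $ab = ba$ by symmetry.

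Finally, for minimality of $\dpp$ among distributive lattice congruences, let $\rho$ be any such congruence on $S$. Since $(S/\rho, +)$ is a semilattice, from $a \mathrel\dpp b$ — i.e.\ $a+b+a = a$ and $b+a+b = b$ — one reads $[a]_\rho = [a+b+a]_\rho = [a]_\rho + [b]_\rho$ and symmetrically $[b]_\rho = [a]_\rho + [b]_\rho$, forcing $[a]_\rho = [b]_\rho$ and hence $a \mathrel\rho b$. Therefore $\dpp \subseteq \rho$, completing the proof.
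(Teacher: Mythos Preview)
Your proof is correct, but it follows a different route from the paper's. For $(1)\Rightarrow(2)$ the two arguments coincide. For $(2)\Rightarrow(1)$, however, the paper does not verify directly that $S/\dpp$ is a distributive lattice; instead it invokes the relation $\sigma$ (and its transitive closure $\eta$, known from \cite{sen-bhuniya-debnath} to be the least distributive lattice congruence on any idempotent semiring) and proves $\dpp=\eta$ by showing $\dpp\subseteq\eta$ and $\sigma\subseteq\dpp$, the latter via the identity $x+yxy+x=x$ of Lemma~\ref{bi=x+yxy+x}. Your approach is more self-contained: you establish absorption in the quotient and then use a short lattice-theoretic argument (identifying $ab$ as the greatest lower bound of $a,b$ in the additive order) to force multiplicative commutativity of $S/\dpp$, after which distributivity comes for free from the semiring law. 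The trade-off is that the paper's $\sigma$--$\eta$ technique is chosen precisely because, as the authors remark, it transfers uniformly to the analogous characterizations with $\lp$ and $\rp$ in place of $\dpp$ (Theorem~\ref{lp least lat cong} and its dual), whereas your infimum argument is tailored to the case where the quotient addition is already a semilattice.
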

\begin{proof}
$\mathbf{(1) \Rightarrow (2):}$ \ Let $a, b \in S$. Then $(a+ab) \mathrel\dpp a$ implies that $a=a+(a+ab)+a$ and so $a=a+ab+a$. Similarly $a=a+ba+a$. Thus $S \in \bi$.
\\ $\mathbf{(2) \Rightarrow (1):}$ \ Let $a, b \in S$ such that $a \mathrel\dpp b$. Then $a=a+b+a$ implies that $bab=bab+b+bab$. Similarly $b=b+a+b$ implies that $aba=aba+a+aba$. Hence by Lemma 2.1, $a \mathrel\sigma b$ and so $a \mathrel\eta b$. Thus $\dpp \subseteq \eta$.

Now let $a \mathrel\sigma b$. Then $aba=aba+a+aba$ and $bab=bab+b+bab$. Since $S \in \bi$, so $a=a+bab+a$ and $b=b+aba+b$. Then $aba=aba+a+aba$ implies that
\begin{align*}
& b+aba+b=b+aba+a+aba+b \\
\Rightarrow \ & b=b+aba+a+aba+b \\
\Rightarrow \ & b+a+aba+b=b+aba+a+aba+b \\
\Rightarrow \ & b+a+aba+b=b \\
\Rightarrow \ & b+a+b=b.
\end{align*}
Similarly $bab=bab+b+bab$ implies that $a=a+b+a$. Thus $a \mathrel\dpp b$. Hence $\sigma \subseteq \dpp$. Since $\dpp$ is a congruence on $S$, we have that $\eta \subseteq \dpp$. Therefore $\eta = \dpp$.
\end{proof}

Now we characterize the idempotent semirings $S$ such that $\lp$ is the least distributive lattice congruence on $S$. First we prove the following lemma.
\begin{lemma}  \label{l-plus lemma}
Let $S$ be an idempotent semiring. Then the following conditions are equivalent.
\begin{enumerate}
\item \vspace{-.3cm}
$S \in \vln$;
\item \vspace{-.4cm}
$S \in \vn$ and $\rp \subseteq \dd$.
\end{enumerate}
\end{lemma}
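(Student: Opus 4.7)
The statement is an equivalence between a single identity ($\vln$) and a pair of conditions ($\vn$ together with $\rp \subseteq \dd$). Both directions admit short, computational arguments, and my plan is to treat each by a direct manipulation of the defining identities, using the band structure on $(S,+)$ and $(S,\cdot)$ freely.

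For the forward direction, assume $S \in \vln$, so $x + xyx = x$ for all $x,y$. That $S \in \vn$ is immediate: adding $x$ on the right to $x + xyx = x$ and using $x + x = x$ gives $x + xyx + x = x$. To show $\rp \subseteq \dd$, take $a\,\rp\,b$, i.e. $a+b = b$ and $b+a=a$. The key trick is to substitute $b = a+b$ inside $aba$ and expand:
\begin{equation*}
aba = a(a+b)a = (aa + ab)a = (a + ab)a = aa + aba = a + aba,
\end{equation*}
and then $\vln$ applied to $a$ gives $a + aba = a$, so $aba = a$. By the symmetric computation using $a = b+a$ inside $bab$, together with $\vln$ applied to $b$, one gets $bab = b$. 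Hence $a\,\dd\,b$.

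For the backward direction, assume $S \in \vn$ and $\rp \subseteq \dd$. Given arbitrary $x,y \in S$, the idea is to pick a companion element whose $\rp$-relation to $x$ manufactures the $\vln$-identity. Set $u = x + xyx$. Then $x + u = x + x + xyx = x + xyx = u$, while $u + x = x + xyx + x = x$ by $\vn$. Thus $x\,\rp\,u$, so by hypothesis $x\,\dd\,u$; in particular $xux = x$. Expanding $xux = x(x+xyx)x$ by distributivity and using multiplicative idempotence gives $xux = x + xyx$. Combining, $x + xyx = x$, i.e. $S \in \vln$.

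I do not anticipate a genuine obstacle. Both directions are one-line substitutions once one recognizes the right rewrite ($b \mapsto a+b$ inside $aba$ for the forward direction, and the element $u = x+xyx$ for the backward direction). The only care needed is to keep track of the band identities $x+x=x$ and $xx = x$ during the expansions and to apply $\vln$ to the correct variable (to $a$ for $aba=a$ and to $b$ for $bab=b$).
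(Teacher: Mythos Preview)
Your proof is correct. The forward direction $(1)\Rightarrow(2)$ is essentially identical to the paper's: the paper writes the single line $a=a(a+b)a=aba$, which is precisely your expansion $aba=a(a+b)a=a+aba=a$ in compressed form.

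For the backward direction $(2)\Rightarrow(1)$ your argument is genuinely different and noticeably cleaner. The paper considers the pair $(a+b)\,\rp\,(a+b+a)$, obtains $(a+b+a)(a+b)(a+b+a)=a+b+a$ from $\rp\subseteq\dd$, and then has to sandwich by $a$ on both sides and twice invoke the $\vn$-consequence $xz+xyz+xz=xz$ (Lemma~3.3) to peel off the extra factors and reach $a(a+b)a=a$. Your choice $u=x+xyx$ is sharper: the $\rp$-relation $x\,\rp\,u$ follows directly from $\vn$, and the single expansion $xux=x(x+xyx)x=x+xyx$ delivers the $\vln$-identity at once, bypassing Lemma~3.3 entirely. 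The paper's route has the mild advantage that the same pair $(a+b,\,a+b+a)$ is reused elsewhere (e.g.\ in Theorem~\ref{lp least lat cong}), but for this lemma in isolation your argument is shorter.
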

\begin{proof}
$\mathbf{(1) \, \Rightarrow \, (2):}$ \, Let $a, b \in S$ such that $a \mathrel\rp b$. Then $a=b+a$ and $b=a+b$. Now
\begin{align*}
& a=a(a+b)a=aba \\
\textrm{and} \hspace{1cm} & b=b(b+a)b=bab
\end{align*}
implies that $a \mathrel\dd b$. Thus $\rp \subseteq \dd$. Also it follows trivially that $S \in \vn$.
\\$\mathbf{(2) \, \Rightarrow \, (1):}$ \, Let $a, b \in S$. Then $(a+b) \mathrel\rp (a+b+a)$. Then $\rp \subseteq \dpp$ implies that $(a+b) \mathrel\dpp (a+b+a)$. This implies that
\begin{align*}
& (a+b+a)(a+b)(a+b+a)=(a+b+a) \\
\Rightarrow \, & a(a+b+a)(a+b)(a+b+a)a=a(a+b+a)a \\
\Rightarrow \, & a(a+b+a)(a+b)(a+b+a)a=a, \hspace{1cm} (\textrm{since} \, S \in \vn) \\
\Rightarrow \, & a(a+b+a)(a+b)(a+b+a)a=a(a+b+a)(a+b)a \\
\Rightarrow \, & a=a(a+b+a)(a+b)a \\
\Rightarrow \, & a(a+b)a=a(a+b+a)(a+b)a \\
\Rightarrow \, & a(a+b)a=a.
\end{align*}
Thus $S\in \vln$.
\end{proof}
\begin{theorem}                                        \label{lp least lat cong}
Let $S$ be an idempotent semiring. Then the following conditions are equivalent.
\begin{enumerate}
\item
$\lp$ is the least distributive lattice congruence on $S$;
\item
$S \in \vln$ and $\dd \subseteq \lp$;
\item
$S \in \vn$ and $\rp \subseteq \dd \subseteq \lp$;
\item
$\leq^{l}_{+} \subseteq \leq_{\cdot}$ for $S$;
\item
$S$ satisfies the identity
\begin{equation}
x = (x+y)x(x+y);
\end{equation}
\item
$S$ satisfies the identity
\begin{equation}
 x = x+yxy.                                                  \label{identity for lp}
\end{equation}
\end{enumerate}
\end{theorem}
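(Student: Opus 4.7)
The plan is to prove the cycle $(1)\Rightarrow(5)\Leftrightarrow(6)\Leftrightarrow(4)$ together with $(5)\Rightarrow(2)\Leftrightarrow(3)\Rightarrow(1)$, using the identities $(5)$ and $(6)$ as a hub; the equivalence $(2)\Leftrightarrow(3)$ is already supplied by Lemma~\ref{l-plus lemma}. The hub equivalences fall out by direct algebraic manipulation. By distributivity, $(x+y)x(x+y)=x+xy+yx+yxy$, so $(5)$ reads $x=x+xy+yx+yxy$; adding $yxy$ on the right and using additive idempotency collapses the sum to $x=x+yxy$, which is $(6)$. Conversely, substituting $y\mapsto x+y$ in $(6)$ and expanding (using $x+x=x$) returns $(5)$. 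For $(5)\Leftrightarrow(4)$: if $a\leq^l_+ b$, i.e.\ $b=a+b$, then $(5)$ with $y:=b$ gives $a=bab$, whence multiplicative idempotency yields $ab=bab\cdot b=bab=a$ and $ba=b\cdot bab=bab=a$, so $a\leq_\cdot b$. Conversely, $a\leq^l_+ a+b$ is automatic since $a+(a+b)=a+b$, and $(4)$ forces $a(a+b)=(a+b)a=a$, so $(a+b)a(a+b)=a(a+b)=a$.

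For $(1)\Rightarrow(5)$: since $S/\lp$ is a distributive lattice, absorption and distributivity give $\overline{(a+b)a(a+b)}=\bar a$, so $(a+b)a(a+b)\mathrel\lp a$, which entails $a+(a+b)a(a+b)=a$. But $(a+b)a(a+b)=a+ab+ba+bab$ begins with $a$, so additive idempotency gives $a+(a+b)a(a+b)=(a+b)a(a+b)$, and combining these two displayed equalities yields $(a+b)a(a+b)=a$. For $(5)\Rightarrow(2),(3)$: Lemma~\ref{bi=x+yxy+x} places $S$ in $\bi\subseteq\vn$. If $a\mathrel\rp b$, then $a+b=b$, so $(5)$ gives $a=bab$; symmetrically $b=aba$. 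Hence $\rp\subseteq\dd$. If $a\mathrel\dd b$, then $aba=a$ and $bab=b$, so $(6)$ gives $a=a+bab=a+b$ and $b=b+aba=b+a$, proving $a\mathrel\lp b$. Hence $\dd\subseteq\lp$. This is $(3)$, and $(2)$ follows by Lemma~\ref{l-plus lemma}.

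The main obstacle is $(3)\Rightarrow(1)$. Since $\rp\subseteq\dd\subseteq\lp$, in particular $\rp\subseteq\lp$, so $\dpp=\lp\vee\rp=\lp$; hence $\lp$ is a congruence, and by Theorem~\ref{dpp least} it suffices to prove $S\in\bi$. First I derive $S\in\vln$: the $\vn$-identity $a+aya+a=a$ shows that $(a,a+aya)\in\rp$, and applying $\rp\subseteq\dd$ gives $a=a(a+aya)a=a+aya$. Next I verify that $(x,x+xy)\in\dd$ by two short calculations using $\vln$ and multiplicative idempotency: $x(x+xy)x=x+xyx=x$ and $(x+xy)x(x+xy)=x+xy$. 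Then $\dd\subseteq\lp$ gives $x+(x+xy)=x$, which via $x+x=x$ reduces to the strong identity $x+xy=x$; a symmetric argument using $(x,x+yx)$ produces $x+yx=x$. Adding $x$ on the right returns the two defining identities of $\bi$, completing the proof.
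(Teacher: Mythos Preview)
Your argument is correct. The logical skeleton, however, differs from the paper's. The paper runs the main cycle as $(1)\Rightarrow(2)\Rightarrow(5)\Rightarrow(1)$: from $(1)$ it first extracts $S\in\vln$ via $a(a+b)\mathrel\lp a$ (so $a=a+ab$) and notes $\dd\subseteq\lp$ because $\dd$ is the least semilattice congruence on $(S,\cdot)$; then $(2)\Rightarrow(5)$ uses $ab\mathrel\lp ba$ to expand $a=a(a+b)a=(a+ab)(a+ba)$ into $(a+b)a(a+b)$; finally $(5)\Rightarrow(1)$ shows $S\in\bi$ and then checks directly that $a\mathrel\dpp b$ forces $a=a+b$. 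You instead run $(1)\Rightarrow(5)\Rightarrow(3)\Rightarrow(1)$: you jump straight to $(a+b)a(a+b)\mathrel\lp a$ to get $(5)$, read off $(3)$ from $(5)/(6)$ elementwise, and for $(3)\Rightarrow(1)$ you observe $\rp\subseteq\lp$ collapses $\dpp$ to $\lp$, then recover $S\in\vln$ from $\vn$ together with $\rp\subseteq\dd$, and finally exploit $(x,x+xy),(x,x+yx)\in\dd\subseteq\lp$ to obtain the strong absorption laws $x+xy=x$ and $x+yx=x$, placing $S$ in $\bi$. Your route avoids the paper's somewhat delicate $(2)\Rightarrow(5)$ computation and, as a by-product, isolates the pleasant fact that under $(3)$ one actually has $x+xy=x=x+yx$; the price is that $(3)\Rightarrow(1)$ is longer than the paper's $(5)\Rightarrow(1)$. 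Both proofs rely on Lemma~\ref{l-plus lemma}, Lemma~\ref{bi=x+yxy+x}, and Theorem~\ref{dpp least} at the same junctures.
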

\begin{proof}
Equivalence of (2) and (3) follows from the Lemma \ref{l-plus lemma}. Equivalence of (5) and (6) follows trivially.
\\ $\mathbf{(1) \, \Rightarrow \, (2):}$ \, Assume that (1) holds. Let $a, b \in S$. Then the absorption law in $S/\lp$ yields that
$$ a(a+b) \mathrel\lp a.$$
Therefore $ a=a+a(a+b)=a+ab $ and so $a=a+aba$ for all $a, b \in S$. Thus $S \in \vln$. Since $\dd$ is the least semilattice congruence on the additive redact $(S, \cdot)$ so $\dd \subseteq \lp$. Thus (2) holds.
\\ $\mathbf{(2) \, \Rightarrow \, (5):}$ \, Let $a, b \in S$. Then $ab \mathrel\dd ba$ and so $ab \mathrel\lp ba$. Therefore
\begin{align*}
& ab=ab+ba \\
\textrm{and}\, \, & ba=ba+ab.
\end{align*}
Since $S \in \vln$, so $a=a(a+b)a$. This implies that
\begin{align*}
& a=(a+ab)(a+ba) \\
\Rightarrow \, & a=a+(a+ab)(a+ba)\\
\Rightarrow \, & a=a+(a+ab+ba)(a+ba+ab) \\
\Rightarrow \, & a=a+aba+ab+ba+bab \\
\Rightarrow \, & a=a+ab+ba+bab \\
\Rightarrow \, & a=(a+b)a(a+b)
\end{align*}
for all $a, b \in S$. Thus (5) follows.
\\ $\mathbf{(5) \, \Rightarrow \, (1):}$ \, For every $a, b \in S$, we have $a=(a+b)a(a+b)=a+bab$. This implies that $a=a+bab+a$. Therefore $\dpp$ is the least distributive lattice congruence on $S$, by Theorem \ref{dpp least}. Again for every $a, b \in S$, \,
\begin{align*}
& a=(a+b)a(a+b)  \\
\Rightarrow \, & a=a(a+b)a(a+b)a  \\
\Rightarrow \, & a=a(a+b)a.
\end{align*}
Now let $a, b \in S$ such that $a \mathrel\dpp b$. Then $a=a+b+a$ and $b=b+a+b$. Now
\begin{align*}
& a=a+b+a \\
\Rightarrow \, & a(a+b)a=(a+b+a)(a+b)(a+b+a) \\
\Rightarrow \, & a=a+b.
\end{align*}
Similarly $b=b+a$. Hence $a \mathrel\lp b$ and so $\dpp \subseteq \lp$. Therefore $\dpp=\lp$. Thus $\lp$ is the least distributive lattice congruence on $S$.
\\ $\mathbf{(4) \, \Rightarrow \, (5):}$ \, Let $a, b \in S$. Then $a \mathrel\leq^{l}_{+} a+b$. This implies that $a \mathrel\leq_{\cdot} a+b$. Then
$$ (a+b)a=a=a(a+b). $$
and so $a=(a+b)a(a+b)$. Thus $S$ satisfies the identity:
$$ x = (x+y)x(x+y).$$
\\ $\mathbf{(5) \, \Rightarrow \, (4):}$ \, Let $a, b \in S$ such that $a \mathrel\leq^{l}_{+} b$. Then $b=a+b$. Also $a, b \in S$ implies that $a=(a+b)a(a+b)$. This implies that
\begin{align*}
& a=bab \\
\Rightarrow \, & ba=a=ab
\end{align*}
and so $a \mathrel\leq_{\cdot} b$. Therefore $\leq^{l}_{+} \subseteq \leq_{\cdot} $.
\end{proof}

\begin{corollary}    \label{rp least lat cong}
Let $S$ be an idempotent semiring. Then the following conditions are equivalent.
\begin{enumerate}
\item
$\rp$ is the least distributive lattice congruence on $S$;
\item
$S \in \vrn$ and $\dd \subseteq \rp$;
\item
$S \in \vn$ and $\lp \subseteq \dd \subseteq \rp$;
\item
$\leq^{r}_{+} \subseteq \leq_{\cdot}$ for $S$;
\item
$S$ satisfies the identity
\begin{equation}
x = (y+x)x(y+x);
\end{equation}
\item
$S$ satisfies the identity
\begin{equation}
 x = yxy+x.                                 \label{identity for rp}
\end{equation}
\end{enumerate}
\end{corollary}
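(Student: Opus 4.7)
The strategy is to obtain Corollary \ref{rp least lat cong} as the left-right dual of Theorem \ref{lp least lat cong} with respect to the additive reduct. Given any idempotent semiring $(S, +, \cdot)$, let $S^{\mathrm{op}}$ denote the structure with the same underlying set, the same multiplication, but with addition reversed: $a \oplus b := b + a$. Since the additive reduct is a band, $\oplus$ is associative and idempotent; distributivity is inherited, because $(a \oplus b)c = (b+a)c = bc + ac = ac \oplus bc$, and similarly for the other side. Hence $S^{\mathrm{op}}$ is again an idempotent semiring, and $S \mapsto S^{\mathrm{op}}$ is an involution of $\vi$.

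I expect this passage to the opposite additive semiring to interchange $\lp$ with $\rp$ (as relations on $S$), $\leq^l_+$ with $\leq^r_+$, and the varieties $\vln$ with $\vrn$; while $\dd$, $\leq_\cdot$, and the varieties $\vn$ and $\bi$ are self-dual. Congruences on $S$ and $S^{\mathrm{op}}$ are literally the same equivalence relations, and the quotient distributive lattice obtained modulo a distributive lattice congruence on $S^{\mathrm{op}}$ is the order-dual of the one obtained on $S$; since distributive lattices are closed under order-duality, being the least distributive lattice congruence is preserved by the involution.

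With these translations, each of the six conditions of Theorem \ref{lp least lat cong} applied to $S^{\mathrm{op}}$ becomes the correspondingly numbered condition of Corollary \ref{rp least lat cong} applied to $S$. For instance, $S^{\mathrm{op}}$ satisfying $x = (x+y)x(x+y)$ in its own addition means $x = (x \oplus y)x(x \oplus y) = (y+x)x(y+x)$ in $S$; and $S^{\mathrm{op}} \in \vln$ translates to $x \oplus xyx = x$ in $S$, i.e.\ $xyx + x = x$, which is exactly $S \in \vrn$. The equivalence of conditions (1)--(6) for $S$ is then an immediate consequence of the equivalence of conditions (1)--(6) of Theorem \ref{lp least lat cong} applied to $S^{\mathrm{op}}$.

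The only substantive step is the initial verification that $S^{\mathrm{op}}$ is an idempotent semiring, sketched above; everything else is a mechanical translation, so I do not anticipate a real obstacle. Alternatively, one could mimic each line of the proof of Theorem \ref{lp least lat cong} with the summands reversed in every additive expression (e.g.\ using $(b+a)a = a = a(b+a)$ in place of $(a+b)a = a = a(a+b)$), arriving at the same conclusion without invoking the involution explicitly.
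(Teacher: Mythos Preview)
Your proposal is correct and matches the paper's own treatment: the corollary is stated without proof precisely because it is the left--right additive dual of Theorem~\ref{lp least lat cong}, and your passage to $S^{\mathrm{op}}$ makes that duality explicit. The verifications you give (that $S^{\mathrm{op}}$ is again an idempotent semiring, that $\lp \leftrightarrow \rp$ and $\leq^l_+ \leftrightarrow \leq^r_+$ while $\dd$, $\leq_\cdot$, $\vn$ are fixed, and that the least distributive lattice congruence is preserved) are all accurate.
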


\begin{definition}
An idempotent semiring $S$ is called a left [right] band semiring if $\lp$ [$\rp$] is the least distributive lattice congruence on $S$.
\end{definition}

Thus, by Theorem \ref{lp least lat cong} and Corollary \ref{rp least lat cong}, an idempotent semiring $S$ is a left [right] band semiring if it satisfies the additional identity:
\begin{equation}
 x=x+yxy \; [x = yxy+x].
\end{equation}

We denote the variety of all left [right] band semirings by $\lbi$ [$\rbi$].
\begin{theorem}
$\lbi = \mathbf{LZ}^{+} \circ \vd$.
\end{theorem}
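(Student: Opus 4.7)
\noindent I read $\mathbf{LZ}^+ \circ \vd$ as the Mal'cev product, i.e.\ the class of idempotent semirings $S$ that carry a semiring congruence $\rho$ with $S/\rho \in \vd$ and each $\rho$-class a subsemiring lying in $\mathbf{LZ}^+$. The plan is to establish the two containments separately: the forward one by taking $\rho := \lp$ itself, and the backward one by deriving the defining identity $x = x + yxy$ of $\lbi$ via absorption in a distributive lattice.

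For $\lbi \subseteq \mathbf{LZ}^+ \circ \vd$, let $S \in \lbi$ and set $\rho := \lp$. By Theorem~\ref{lp least lat cong}, $\rho$ is the least distributive lattice congruence on $S$, so $S/\rho \in \vd$. Since $\rho$ is a semiring congruence, each class $H$ is automatically a subsemiring of $S$; and for any $a, b \in H$ the relation $a \lp b$ specializes to the defining additive identity of $\mathbf{LZ}^+$ throughout $H$, forcing $H \in \mathbf{LZ}^+$.

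For $\mathbf{LZ}^+ \circ \vd \subseteq \lbi$, let $S$ carry such a $\rho$. Passing to $S/\rho$ and applying absorption in the distributive lattice gives $\bar{x} + \bar{y}\,\bar{x}\,\bar{y} = \bar{x}$, so $x$ and $x + yxy$ lie in a common $\rho$-class $H \in \mathbf{LZ}^+$. A single application of the defining identity of $\mathbf{LZ}^+$ to the pair $x,\, yxy \in H$ then collapses $x + yxy$ to $x$, yielding the identity $x = x + yxy$ globally. By Theorem~\ref{lp least lat cong}(6) this is equivalent to $S \in \lbi$.

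The main technical point, and the one shared by both directions, is the handedness bookkeeping: one must verify that the additive identity $\lp$ enforces inside each $\lp$-class is precisely $x + y = y$ (the defining identity of $\mathbf{LZ}^+$), and correspondingly that the one-step absorption inside the class in the backward direction delivers $x + yxy = x$ rather than $yxy + x = x$. Once this matching is pinned down, both inclusions reduce to one-line applications of Theorem~\ref{lp least lat cong} together with the absorption law in a distributive lattice; no deeper obstacle is anticipated.
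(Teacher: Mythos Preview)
Your forward direction is sound and is the inclusion the paper dismisses as trivial. The backward direction, however, contains a real gap: you assert that ``the pair $x,\, yxy \in H$'', but absorption in $S/\rho$ only yields $x + yxy \mathrel\rho x$, not $yxy \mathrel\rho x$. In the distributive lattice $S/\rho$ one has $\overline{yxy} = \bar x\,\bar y \le \bar x$, with strict inequality whenever $\bar y \not\ge \bar x$, so $yxy$ generally lies in a \emph{different} $\rho$-class and the left-zero identity of $H$ cannot be applied to the pair $(x, yxy)$. The repair is short once you use the correct pair: both $x$ and $x+yxy$ lie in $H$, so the left-zero law $u+v=u$ gives $x+(x+yxy)=x$; combined with $x+(x+yxy)=x+yxy$ (additive idempotence) this is exactly $x+yxy=x$. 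The paper's own route for this inclusion is slightly different: it uses the absorptions $(a+b)a \mathrel\delta a$ and $a(a+b) \mathrel\delta a$, then the left-zero law together with the algebraic identity $a+(a+b)a=(a+b)a$ to obtain $a=(a+b)a$ and $a=a(a+b)$, hence $a=(a+b)a(a+b)$, invoking condition~(5) rather than~(6) of Theorem~\ref{lp least lat cong}.

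On the handedness you flag but do not actually settle: from the paper's description of $\lp$ (the left--right dual of $\rp$), $a \mathrel\lp b$ means $a+b=a$ and $b+a=b$, so each $\lp$-class obeys $x+y=x$, not $x+y=y$ as you state. Both your argument (once repaired) and the paper's proof require the left-zero law in the form $x+y=x$; the entry for $\mathbf{LZ}^+$ in Table~1 appears to be a misprint, and with $x+y=y$ the inclusion $\mathbf{LZ}^+ \circ \vd \subseteq \lbi$ would in fact fail.
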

\begin{proof}
First assume that $S \in \mathbf{LZ}^{+} \circ \mathbf{D}$. Then there exists a congruence $\delta$ on $S$ such that $S/\delta \in \vd$ and each $\delta$-class belongs to $\mathbf{LZ}^{+}$. Let $a, b \in S$. Since $S/\delta \in \vd$, we have $a(a+b) \mathrel\delta a$ and $(a+b)a \mathrel\delta a$. Since the $\delta$-class of $a$ is in $\mathbf{LZ}^+$, we have
$$ a=a+(a+b)a=(a+b)a \hspace{.5cm} \textrm{and} \hspace{.5cm} a=a+a(a+b)=a(a+b). $$
Then $a=(a+b)aa(a+b)=(a+b)a(a+b)$ and it follows, by the equation (\ref{identity for lp}), that $S \in \lbi$. Thus $\mathbf{LZ}^{+} \circ \vd \subseteq \lbi$.

The reverse inclusion follows trivially. Hence $\lbi = \mathbf{LZ}^{+} \circ \vd$.
\end{proof}

The left-right dual of this theorem reads as:
\begin{theorem}
$\rbi = \mathbf{RZ}^{+} \circ \vd$.
\end{theorem}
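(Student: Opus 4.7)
The theorem is the left--right dual of the immediately preceding one, so my plan is to replay that proof verbatim with sides swapped throughout, using the characterizing identity $x = (y+x)x(y+x)$ for $\rbi$ supplied by Corollary \ref{rp least lat cong}(5) in place of the identity $x = (x+y)x(x+y)$ for $\lbi$, and the $\rzp$-identity $x + y = x$ in place of the $\lzp$-identity $x + y = y$. The absorption laws in a distributive lattice are symmetric, so the dual absorptions $(b+a)a = a = a(b+a)$ in $S/\delta$ are available exactly as before.

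For the inclusion $\rzp \circ \vd \subseteq \rbi$, I would take a semiring $S$ equipped with a congruence $\delta$ such that $S/\delta \in \vd$ and every $\delta$-class lies in $\rzp$. Fix $a,b \in S$. Absorption in the distributive lattice $S/\delta$ forces $(b+a)a \mathrel\delta a$ and $a(b+a) \mathrel\delta a$. Applying the $\rzp$-identity $x+y = x$ inside the $\delta$-class of $a$ then collapses these relations to the equalities $a = (b+a)a + a = (b+a)a$ and $a = a(b+a) + a = a(b+a)$. Multiplying the two and using multiplicative idempotency $a\cdot a = a$ gives
\[
a \;=\; a\cdot a \;=\; (b+a)a \cdot a(b+a) \;=\; (b+a)\,a\,(b+a),
\]
which is precisely the defining identity of $\rbi$. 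Hence $S \in \rbi$.

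For the reverse inclusion $\rbi \subseteq \rzp \circ \vd$, if $S \in \rbi$ then Corollary \ref{rp least lat cong} tells us that $\rp$ is itself the least distributive lattice congruence on $S$, so $S/\rp \in \vd$, and on each $\rp$-class the identity $x+y = x$ holds by the very definition of $\rp$; thus each $\rp$-class lies in $\rzp$ and $S \in \rzp \circ \vd$. Since the whole argument is a formal mirror of the one already written out for $\lbi$, there is no substantive obstacle: the only points requiring attention are to use the dual absorption form $(b+a)a = a = a(b+a)$ and to cite the dual defining identity from Corollary \ref{rp least lat cong}(5) rather than Theorem \ref{lp least lat cong}(5).
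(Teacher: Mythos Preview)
Your strategy of dualizing the preceding proof is exactly what the paper does (it simply records this theorem as ``the left--right dual'' and supplies no separate argument), so the plan is right. However, the identity you ascribe to $\rzp$ is the wrong one, and this makes both halves of your argument fail as written. You take $\rzp$ to mean $x+y=x$ and then assert that ``on each $\rp$-class the identity $x+y=x$ holds by the very definition of $\rp$''; but the paper's own definition of $\rp$ is $a\mathrel\rp b \Leftrightarrow a+b=b,\ b+a=a$, so inside an $\rp$-class the law that holds is $x+y=y$, not $x+y=x$. (The entries for $\lzp$ and $\rzp$ in Table~1 are evidently transposed: the chain $a=a+(a+b)a=(a+b)a$ in the proof of the $\lbi$ theorem only goes through when $\lzp$ reads $x+y=x$, hence dually $\rzp$ must read $x+y=y$.)

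Concretely, with your identity $x+y=x$, applying $\rzp$ to $(b+a)a+a$ returns the \emph{left} summand $(b+a)a$, not $a$, so the equality ``$a=(b+a)a+a$'' is unjustified and the forward inclusion breaks. Replacing the $\rzp$-law by the correct $x+y=y$ repairs everything: from $(b+a)a\mathrel\delta a$ one gets $(b+a)a+a=a$ by $\rzp$, while the computation $(b+a)a+a=ba+a+a=ba+a=(b+a)a$ (distributivity plus additive idempotency) gives the other equality, hence $a=(b+a)a$; similarly $a=a(b+a)$, and multiplying yields $a=(b+a)a(b+a)$. The reverse inclusion is then genuinely trivial, since each $\rp$-class satisfies $x+y=y$, i.e., lies in $\rzp$, and $S/\rp\in\vd$. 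So your outline is correct once the $\rzp$-identity is set to $x+y=y$.
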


Thus for every $S \in \mathbf{LBI}$ $[\mathbf{RBI}]$ the additive reduct $(S, +)$ is a semilattice of left [right] zero band and so a left [right] regular band. In \cite{Wang}, Wang, Zhou and Guo proved that the additive reduct $(S, +)$ of a band semiring is a regular band. Hence, by Theorem 3.6 \cite{SGS}, we have the following result.
\begin{theorem}                                  \label{dp=lp join rp}
An idempotent semiring $S$ is a band semiring if and only if it is a spined product of a left band semiring and a right band semiring with respect to a distributive lattice.
\end{theorem}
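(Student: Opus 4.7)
The plan is to combine the two facts stated immediately before the theorem. The easy direction is a varietal formality: the spined product of a left band semiring $L$ and a right band semiring $R$ over a distributive lattice is a subdirect product of $L$ and $R$, and since $\lbi,\rbi\subseteq\bi$ and $\bi$ is a variety (closed under subdirect products), the spined product itself lies in $\bi$.

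For the forward direction, let $S\in\bi$. The first task is to verify that $\lp$ and $\rp$ are semiring congruences on $S$ ($\dpp$ already is, by Theorem \ref{dpp least}). Additive compatibility is automatic, because by Wang--Zhou--Guo the additive reduct $(S,+)$ is a regular band, and on a regular band the $\mathcal{L}$- and $\mathcal{R}$-relations are congruences; multiplicative compatibility can be extracted from the identities $x+xy+x=x$ and $x+yx+x=x$ together with the explicit descriptions $a\mathrel\rp b \Leftrightarrow a+b=b,\ b+a=a$ and dually for $\lp$. Next I identify the quotients. Since $S/\dpp$ is the universal distributive lattice quotient of $S$ and the additive reduct of $S/\rp$ is the quotient of a regular band by $\mathcal{R}$, hence left regular, the relations $\lp$ and $\dpp$ coincide on $S/\rp$; so $\lp$ is the least distributive lattice congruence there and $S/\rp\in\lbi$. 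Dually $S/\lp\in\rbi$.

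Finally I consider the diagonal homomorphism $\Phi\colon S\to S/\rp\times S/\lp$. Its image lies in the spined product over $S/\dpp$ by construction of the canonical projections, and injectivity follows from $\lp\cap\rp=\hp$ together with the triviality of $\hp$ on the band $(S,+)$. Surjectivity onto the spined product is the regular-band spined product theorem (Theorem 3.6 of \cite{SGS}) applied to $(S,+)$: every pair of $\rp$- and $\lp$-classes with matching $\dpp$-image arises from some $s\in S$ on the additive side, and the multiplication agrees componentwise because $\Phi$ is already a semiring homomorphism. The main obstacle is exactly the lifting from the additive-band spined product to a semiring spined product; this is handled by confirming that $\lp$ and $\rp$ are two-sided semiring congruences, so that both quotients and the spined product carry their intended semiring structure and $\Phi$ is an isomorphism of semirings onto its image.
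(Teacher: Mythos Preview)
Your argument is correct and follows essentially the same route as the paper: both rely on the Wang--Zhou--Guo result that the additive reduct of a band semiring is a regular band, and then invoke Theorem~3.6 of \cite{SGS} to obtain the spined product decomposition; you simply spell out the mechanics (that $\lp,\rp$ are semiring congruences, that the quotients land in $\lbi,\rbi$, and that the diagonal map is a semiring isomorphism) where the paper just cites. One small remark: multiplicative compatibility of $\lp$ and $\rp$ follows directly from distributivity and does not require the band-semiring identities.
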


Thus $\bi \subseteq \lbi \vee \rbi$. Also $\lbi \subseteq \bi$ and $\rbi \subseteq \bi$, by Lemma \ref{bi=x+yxy+x}, and hence $\lbi \vee \rbi \subseteq \bi$.
\begin{corollary}
$\bi=\lbi \vee \rbi$.
\end{corollary}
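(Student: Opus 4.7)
The corollary is essentially already established in the two sentences immediately preceding it, so the proof proposal is quite short; my job is to package the argument cleanly.

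The plan is to prove the two inclusions $\bi\subseteq\lbi\vee\rbi$ and $\lbi\vee\rbi\subseteq\bi$ separately, and then conclude equality. For the first inclusion, I would invoke Theorem \ref{dp=lp join rp}: every band semiring $S$ is a spined product of a left band semiring $L\in\lbi$ and a right band semiring $R\in\rbi$ over a distributive lattice. Since a spined product is in particular a subdirect product of $L$ and $R$, and subdirect products of members of $\lbi$ and $\rbi$ lie in the join variety $\lbi\vee\rbi$, we conclude $S\in\lbi\vee\rbi$ and hence $\bi\subseteq\lbi\vee\rbi$.

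For the reverse inclusion I would show $\lbi\subseteq\bi$ and $\rbi\subseteq\bi$ directly from the defining identities. If $S\in\lbi$, then by Theorem \ref{lp least lat cong}(6) it satisfies $x=x+yxy$, so for any $a,b\in S$,
\[
a+bab+a \;=\; (a+bab)+a \;=\; a+a \;=\; a,
\]
which is the identity (\ref{identity for dpp}); by Lemma \ref{bi=x+yxy+x}, $S\in\bi$. The argument for $S\in\rbi$ is symmetric, using $x=yxy+x$ from Corollary \ref{rp least lat cong}(6) to rewrite $a+bab+a = a+(bab+a) = a+a = a$. Because $\bi$ is a variety (hence closed under the operator generating joins of subvarieties), $\lbi\vee\rbi\subseteq\bi$ follows. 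Combining the two inclusions gives the result.

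The only step worth any care is the first inclusion, where one must note explicitly that membership in a join of subvarieties is guaranteed by being a subdirect product of members of the two subvarieties; this is the standard fact about varieties that makes Theorem \ref{dp=lp join rp} translate into the lattice-theoretic statement. No genuine obstacle is expected, since the heavy lifting has already been done in the theorems and lemmas cited.
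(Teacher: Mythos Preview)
Your argument is correct and is essentially identical to the paper's: the paper derives $\bi\subseteq\lbi\vee\rbi$ from Theorem \ref{dp=lp join rp} and obtains the reverse inclusion from $\lbi,\rbi\subseteq\bi$ via Lemma \ref{bi=x+yxy+x}, exactly as you do. The only difference is that you spell out the identity computation $a+bab+a=a$ explicitly, whereas the paper simply cites Lemma \ref{bi=x+yxy+x}.
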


Now we give some properties of left and right quasi band semirings.
\begin{lemma}
For an idempotent semiring $S$ the following conditions are equivalent:
\begin{enumerate}
\item \vspace{-.3cm}
$S \in \lqbi$;
\item \vspace{-.4cm}
$S/\dpp \in \lzd \circ \vd$.
\end{enumerate}
\end{lemma}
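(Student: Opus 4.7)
The plan is to reduce everything to a statement about $T := S/\dpp$, which is a b-lattice. Because addition in $T$ is commutative and idempotent, the $\lqbi$ identity $x + xy + x = x$ holds in $S$ if and only if the one-sided absorption $x + xy = x$ holds in $T$ (both directions follow by unwinding $x + xy \mathrel{\dpp} x$). So the task reduces to proving that $T$ satisfies $x + xy = x$ if and only if $T \in \lzd \circ \vd$.

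Assuming $x + xy = x$ in $T$, I would first upgrade this to left regularity of the multiplicative band, namely $xyx = xy$. The partial order $\leq_{+}$ defined by $a + b = b$ is preserved by multiplication on both sides; the identity instantly gives $xy + xyx = xy$, so $xyx \leq_{+} xy$, and left-multiplying $x + xy = x$ by $y$ yields $yx + yxy = yx$, so $yxy \leq_{+} yx$, whose further left multiplication by $x$, combined with the band identity $xyxy = xy$, gives $xy \leq_{+} xyx$. Antisymmetry yields $xyx = xy$. I would then take $\delta := \ld$ on $T$ and verify that it is a semiring congruence: multiplicatively, $\ld$ is a two-sided congruence on any left regular band (a short calculation using $bcb = bc$ gives $(ac)(bc) = ac$ whenever $ab = a$); additively, if $a \mathrel{\ld} b$ then expanding $(a+c)(b+c) = a + ac + cb + c$ and collapsing via $a + ac = a$ and $c + cb = c$ gives $a + c$, and symmetrically $(b+c)(a+c) = b + c$. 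The quotient $T/\delta$ has a semilattice addition inherited from $T$ and a semilattice multiplication from the left regular band quotient; the absorption $\bar{x} + \bar{x}\bar{y} = \bar{x}$ descends, and multiplicative commutativity in the quotient promotes it to its dual $\bar{x} + \bar{y}\bar{x} = \bar{x}$, so $T/\delta \in \vd$. Each $\ld$-class satisfies $xy = x$ by definition of $\ld$, placing it in $\lzd$, and thus (1) implies (2).

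For the converse, let $\delta$ be a congruence on $T$ with $T/\delta \in \vd$ and every class in $\lzd$. Distributive lattice absorption in the quotient gives $x + xy \mathrel{\delta} x$, so $x$ and $x + xy$ lie in the same $\delta$-class; applying the class identity to $x$ and $x + xy$ yields $x(x + xy) = x$. But distributivity and idempotency in $T$ independently give $x(x + xy) = xx + x \cdot xy = x + xy$. Equating produces $x + xy = x$ in $T$, which by the initial reduction delivers (1).

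The main obstacle is the forward direction's verification that $\ld$ is a semiring congruence: the multiplicative side needs the left regularity extracted from $x + xy = x$, and the additive side needs the absorptions $a + ac = a$ and $c + cb = c$ to collapse the cross terms in $(a+c)(b+c)$. Coupling the band-theoretic and additive consequences of the single identity is what makes $\ld$ a semiring congruence; once that is in hand, identifying $T/\ld$ as a distributive lattice and the $\ld$-classes as $\lzd$-members is routine.
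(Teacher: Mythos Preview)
Your argument is correct. Both you and the paper pass to the b-lattice $T=S/\dpp$, and your backward direction is the paper's argument spelled out: the paper asserts that $\ld$ is the least distributive lattice congruence on $T$, whence absorption in $T/\ld$ gives $\overline{a}+\overline{a}\,\overline{b}\mathrel{\ld}\overline{a}$ and one reads off $a+ab+a=a$ in $S$; your computation $x(x+xy)=x$ from the $\lzd$ class identity together with $x(x+xy)=x+xy$ from distributivity is exactly the step the paper leaves implicit.

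The forward direction is where the two diverge. The paper's proof is essentially a citation: it records $T\in\lqbi$ and $\dpp_T=\Delta\subseteq\ld_T$ and concludes $T\in\lzd\circ\vd$ in one line, relying on the background theory of $\mathcal{L}$-subvarieties of idempotent semirings (the referenced work of Zhao--Shum--Guo) for the fact that $\ld$ is then a $\vd$-congruence. Your route is self-contained: you first extract left regularity $xyx=xy$ of $(T,\cdot)$ from $x+xy=x$ via the semilattice order, then verify directly that $\ld$ is a semiring congruence---the multiplicative compatibility from left regularity, the additive compatibility by collapsing the cross terms in $(a+c)(b+c)$ using $a+ac=a$ and $c+cb=c$---and finally identify $T/\ld$ as a distributive lattice. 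What you gain is a proof that needs nothing beyond the paper itself; what the paper gains is brevity at the cost of an external reference.
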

\begin{proof}
$\mathbf{(1) \, \Rightarrow \, (2):}$ \, $\dpp$ is a congruence on every idempotent semiring. Let $S \in \lqbi$. Then $\dpp$ is a congruence on $S$ and $\dpp_{S/\dpp}= \Delta$, the equality relation. Thus $S/\dpp \in \lqbi$ and $\dpp_{S/\dpp} \subseteq \ld_{S/\dpp}$. Therefore $S/\dpp \in \lzd \circ \vd$.
\\ $\mathbf{(2) \, \Rightarrow \, (1):}$ \, Assume that $S/\dpp \in \lzd \circ \vd$. Then $\ld_{S/\dpp}$ is the least distributive lattice congruence on $S/\dpp$. Let $\overline{a}$ be the $\dpp$-class of $a$ in $S$. Then for all $a, b \in S$, we have that
\begin{align*}
(\overline{a}+\overline{a}\overline{b}) \mathrel\ld \overline{a} \, & \Rightarrow \, \overline{a}=\overline{a+ab} \\
                                        & \Rightarrow \, a=a+ab+a
\end{align*}
and so $S \in \lqbi$.
\end{proof}
\begin{corollary}  \label{corollary}
$\lzd \circ \vd \subseteq \lqbi$
\end{corollary}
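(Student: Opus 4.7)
The plan is to read the corollary directly off the preceding lemma, which provides the equivalence $S \in \lqbi$ iff $S/\dpp \in \lzd \circ \vd$. Given $S \in \lzd \circ \vd$, it therefore suffices to verify that $S/\dpp \in \lzd \circ \vd$ as well, for then the backward direction of the lemma immediately yields $S \in \lqbi$.

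To carry out this reduction, I would pick a congruence $\delta$ on $S$ witnessing membership in $\lzd \circ \vd$, so that $S/\delta \in \vd$ and every $\delta$-class lies in $\lzd$, and push this structure down to $S/\dpp$. Concretely, the join $\delta \vee \dpp$ is a congruence on $S$, and it induces a congruence $\overline{\delta}$ on $S/\dpp$ via the quotient map. Then $(S/\dpp)/\overline{\delta} \cong S/(\delta \vee \dpp)$ is a homomorphic image of $S/\delta \in \vd$, so it lies in $\vd$ because $\vd$ is a variety. Likewise, each $\overline{\delta}$-class of $S/\dpp$ is a homomorphic image of the corresponding $\delta$-class of $S$, hence lies in $\lzd$. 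This certifies $S/\dpp \in \lzd \circ \vd$, and the lemma now delivers $S \in \lqbi$.

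The main technical point is the congruence-lattice bookkeeping that verifies $\overline{\delta}$ really has the advertised quotient and class structure. This rests only on the fact that both $\lzd$ and $\vd$ are varieties of idempotent semirings, and so are closed under homomorphic images, which makes the Mal'cev product $\lzd \circ \vd$ preserved under quotienting by $\dpp$. No new idea is required beyond the preceding lemma.
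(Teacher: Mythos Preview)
Your overall strategy --- show $S/\dpp \in \lzd \circ \vd$ and then invoke the lemma --- is the right one and is exactly what the paper intends (it records the corollary with no proof, immediately after the lemma). However, there is a gap in the step ``each $\overline{\delta}$-class of $S/\dpp$ is a homomorphic image of the corresponding $\delta$-class of $S$.'' As you set things up, $\overline{\delta}$ is induced by $\delta \vee \dpp$, so its classes in $S/\dpp$ are images of $(\delta\vee\dpp)$-classes of $S$, and a priori each such class is a union of several $\delta$-classes; there is then no single ``corresponding $\delta$-class,'' and a union of members of $\lzd$ need not lie in $\lzd$. Closure of $\lzd$ and $\vd$ under homomorphic images does not by itself rescue this, since Mal'cev products of varieties are not in general closed under quotients.

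The missing observation is one line: because $S/\delta \in \vd$ has commutative addition, $\dpp$ is trivial on $S/\delta$, hence $\dpp \subseteq \delta$ in $S$ and so $\delta \vee \dpp = \delta$. Then $\overline{\delta} = \delta/\dpp$, each $\overline{\delta}$-class is precisely the image of a single $\delta$-class under $S \to S/\dpp$, and your argument goes through verbatim. With this inserted, your proof is correct and coincides with the paper's implicit reasoning.
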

\begin{theorem}
$\lqbi = \mathbf{R}^{+} \circ (\lzd \circ \vd)$.
\end{theorem}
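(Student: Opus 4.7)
The plan is to establish both inclusions by exploiting the Green relation $\dpp$, which, as recorded in Section~2, is always a congruence on an idempotent semiring whose classes are rectangular idempotent semirings, i.e.\ members of $\mathbf{R}^+$. The main inputs are the preceding lemma characterizing $S \in \lqbi$ by $S/\dpp \in \lzd \circ \vd$, together with Corollary~\ref{corollary} giving $\lzd \circ \vd \subseteq \lqbi$.

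For $\lqbi \subseteq \mathbf{R}^+ \circ (\lzd \circ \vd)$, I would take $S \in \lqbi$ and simply choose $\rho := \dpp$. Every $\dpp$-class is a rectangular idempotent semiring (so lies in $\mathbf{R}^+$), and the quotient $S/\dpp$ belongs to $\lzd \circ \vd$ by the preceding lemma. This exhibits $S$ as an $\mathbf{R}^+$-congruence-class extension of a member of $\lzd \circ \vd$, which is exactly the Mal'cev-product statement $S \in \mathbf{R}^+ \circ (\lzd \circ \vd)$.

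For the reverse inclusion, let $S \in \mathbf{R}^+ \circ (\lzd \circ \vd)$ be witnessed by a congruence $\rho$ with $S/\rho \in \lzd \circ \vd$ and each $\rho$-class in $\mathbf{R}^+$. By Corollary~\ref{corollary}, $S/\rho \in \lqbi$, so for any $x,y \in S$ the images of $x+xy+x$ and $x$ coincide in $S/\rho$; in particular both elements lie in a common $\rho$-class $C$. Since $(C,+)$ is a rectangular band, applying $a+b+a=a$ with $a=x$ and $b=x+xy+x$ yields $x+(x+xy+x)+x=x$, which after additive idempotence collapses to $x+xy+x=x$. Hence $S \in \lqbi$.

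The only subtle point is the element-level step in the last paragraph: flanking $x+xy+x$ with two additional copies of $x$ before invoking rectangular-band absorption inside $C$ is what makes the identity fire with $b$ already equal to the long word. Everything else is a routine unpacking of the symbol $\circ$ and a direct appeal to the preceding lemma and corollary, so no deeper obstruction is expected.
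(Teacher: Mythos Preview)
Your proposal is correct and follows essentially the same route as the paper: both inclusions are obtained by taking $\rho=\dpp$ for the forward direction (using the preceding lemma) and, for the reverse, invoking Corollary~\ref{corollary} to get $(x+xy+x)\mathrel\rho x$ and then the rectangular-band identity inside the $\rho$-class. The only difference is that you spell out the last step ($x+(x+xy+x)+x=x$ followed by additive idempotence), whereas the paper simply asserts that $(a+ab+a)\mathrel\rho a$ together with the $\rho$-class being in $\mathbf{R}^+$ forces $a=a+ab+a$.
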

\begin{proof}
Let $S \in \lqbi$. Then by the above lemma, $S/\dpp \in \lzd \circ \vd$. Hence $\lqbi \subseteq \mathbf{R}^{+} \circ (\lzd \circ \vd)$. If $S \in \mathbf{R}^{+} \circ (\lzd \circ \vd)$ then there exists a congruence $\rho$ on $S$ such that $S/\rho \in \lzd \circ \vd$ and each $\rho$-class is in $\mathbf{R}^{+}$. By Corollary \ref{corollary}, $S/\rho \in \lqbi$. Hence for all $a, b \in S$,
$$ (a+ab+a) \mathrel\rho a. $$
Since each $\rho$-class is in $\mathbf{R}^{+}$, this implies that $a=a+ab+a$. Thus $\lqbi = \mathbf{R}^{+} \circ (\lzd \circ \vd)$.
\end{proof}

The left-right dual of this theorem reads as:
\begin{theorem}
$\rqbi = \mathbf{R}^{+} \circ (\rzd \circ \vd)$.
\end{theorem}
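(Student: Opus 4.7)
The statement is the left--right dual of the preceding theorem, so the plan is to dualize the argument step by step, with $\rp$, $\rzd$ and the identity $x+yx+x=x$ playing the roles of $\lp$, $\lzd$ and $x+xy+x=x$. In particular, there is essentially no new idea required; the task is to check that each supporting lemma in Section~3 used for $\lqbi$ has an honest left--right dual.

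First I would establish the dual of the lemma immediately preceding Corollary~\ref{corollary}, namely: for an idempotent semiring $S$, one has $S\in\rqbi$ if and only if $S/\dpp\in\rzd\circ\vd$. The forward direction uses that $\dpp$ is a congruence on every idempotent semiring and that $\dpp_{S/\dpp}=\Delta$, so $\dpp_{S/\dpp}\subseteq\rd_{S/\dpp}$, placing $S/\dpp$ in $\rzd\circ\vd$. For the converse, writing $\overline{a}$ for the $\dpp$-class of $a$, the absorption $(\overline{a}+\overline{b}\,\overline{a})\mathrel\rd\overline{a}$ in $S/\dpp$ unfolds to $a=a+ba+a$, so $S\in\rqbi$. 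The same calculation gives the dual of Corollary~\ref{corollary}, that is, $\rzd\circ\vd\subseteq\rqbi$.

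For the theorem itself, the inclusion $\rqbi\subseteq\mathbf{R}^+\circ(\rzd\circ\vd)$ is then immediate: for $S\in\rqbi$, take $\rho=\dpp$; each $\dpp$-class is a rectangular idempotent semiring and therefore lies in $\mathbf{R}^+$, while $S/\dpp\in\rzd\circ\vd$ by the dual lemma above. For the reverse inclusion, let $S\in\mathbf{R}^+\circ(\rzd\circ\vd)$ be witnessed by a congruence $\rho$ with $S/\rho\in\rzd\circ\vd$ and each $\rho$-class in $\mathbf{R}^+$. The dual of Corollary~\ref{corollary} gives $S/\rho\in\rqbi$, so $(a+ba+a)\mathrel\rho a$ for all $a,b\in S$. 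Since the $\rho$-class of $a$ satisfies $x+y+x=x$, we collapse this to $a=a+ba+a$, whence $S\in\rqbi$.

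I do not anticipate any genuine obstacle: the proof is a pure left--right dualization. The only point that needs attention is that the intermediate lemma and corollary dualize consistently, i.e.\ that replacing $\ld$-type relations by $\rd$-type ones and $xy$ by $yx$ in the defining identities of $\lqbi$ and $\lzd$ yields exactly the defining identities of $\rqbi$ and $\rzd$, which is transparent from Table~1.
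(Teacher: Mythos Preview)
Your proposal is correct and matches the paper's approach exactly: the paper states this theorem as the left--right dual of the preceding one and gives no separate proof, and you have carried out precisely that dualization, including the dual versions of the supporting lemma and corollary. There is nothing to add.
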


\section{Band orthorings}
\begin{definition}
A completely regular semiring $S$ said to be a (left, right) band orthoring if $E^+(S)$ is a (left, right) band semiring.
\end{definition}

In the following theorem we show that the distributive lattice decomposition of a band orthoring $S$ is governed by that of the band semiring $E^+(S)$.
\begin{theorem}
A completely regular semiring $S$ is a band orthoring if and only if it is a distributive lattice of rectangular rings.
\end{theorem}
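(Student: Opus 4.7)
The plan is to use Theorem~2.3 (orthoring $\Leftrightarrow$ b-lattice of rectangular rings) together with Lemma~3.1 / Theorem~3.8 (band semiring $\Leftrightarrow$ distributive lattice of rectangular idempotent semirings) and transfer the lattice structure between $S$ and $E^+(S)$ via $\dpp$. The key auxiliary observation is the following: if $S=\bigcup_{\alpha\in Y}S_\alpha$ is a decomposition of $S$ as a b-lattice $Y$ of rectangular rings (so each $S_\alpha$ is a direct product $R_\alpha\times B_\alpha$ of a ring $R_\alpha$ and a rectangular idempotent semiring $B_\alpha$), then $E^+(S_\alpha)=\{0_{R_\alpha}\}\times B_\alpha\cong B_\alpha$ is a rectangular idempotent semiring, and hence $E^+(S)=\bigcup_{\alpha\in Y}B_\alpha$ is itself a b-lattice of rectangular idempotent semirings indexed by the same b-lattice $Y$. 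Moreover, since each $B_\alpha$ is a single $\dpp$-class (being a rectangular band) and $E^+(S)/{\sim}\cong Y$ is a semilattice under $+$, this b-lattice decomposition of $E^+(S)$ coincides with $E^+(S)/\dpp$, so $Y\cong E^+(S)/\dpp$.

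For the forward direction, assume $S$ is a band orthoring. Being an orthoring, Theorem~2.3 gives a b-lattice decomposition $S=\bigcup_{\alpha\in Y}S_\alpha$ into rectangular rings. Since $E^+(S)\in \bi$, Theorem~\ref{dpp least} tells us that $\dpp$ is the least distributive lattice congruence on $E^+(S)$, so $E^+(S)/\dpp$ is a distributive lattice. By the observation above, $Y\cong E^+(S)/\dpp$, hence $Y$ is a distributive lattice and $S$ is in fact a distributive lattice of rectangular rings.

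For the converse, assume $S$ is a distributive lattice $Y$ of rectangular rings $S_\alpha$. In particular $Y$ is a b-lattice, so by Theorem~2.3, $S$ is an orthoring. The auxiliary observation then exhibits $E^+(S)$ as a distributive lattice $Y$ of the rectangular idempotent semirings $B_\alpha=E^+(S_\alpha)$, and Lemma~3.1 yields $E^+(S)\in\bi$, i.e.\ $E^+(S)$ is a band semiring; thus $S$ is a band orthoring.

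The only slightly delicate point — and the main place where one must actually verify something rather than quote — is the auxiliary observation that the b-lattice decomposition of the orthoring $S$ restricts to a b-lattice decomposition of $E^+(S)$ indexed by the \emph{same} lattice $Y$, and that this restricted decomposition is exactly the $\dpp$-decomposition of $E^+(S)$. The first part follows from the direct product structure of a rectangular ring, and the second from the fact that a rectangular idempotent semiring is a single $\dpp$-class while $Y$ is a (semi)lattice, forcing the congruence classes to match.
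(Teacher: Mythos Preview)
Your proof is correct, but your forward direction takes a genuinely different route from the paper's.

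The paper argues directly on $S$: it starts from the fact that $\mathcal{J}^{+}$ is a b-lattice congruence on a completely regular semiring and then verifies by explicit element computations (using $a^{o}$, $a'$ and the band-semiring identities on $E^{+}(S)$) that $ab \mathrel{\mathcal{J}^{+}} ba$ and $a \mathrel{\mathcal{J}^{+}} a+ab$, so that $S/\mathcal{J}^{+}$ is a distributive lattice; it then checks that each $\mathcal{J}^{+}$-class is a rectangular ring. You instead invoke Theorem~2.3 to get a b-lattice decomposition $S=\bigcup_{\alpha\in Y}S_\alpha$ up front, restrict it to $E^{+}(S)$, identify the restricted decomposition with the $\dpp$-decomposition of $E^{+}(S)$, and then read off from Lemma~3.1/Theorem~\ref{dpp least} that $Y\cong E^{+}(S)/\dpp$ is a distributive lattice. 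Your argument is more structural and recycles the Section~3 theory of band semirings wholesale, at the cost of depending on Theorem~2.3 (whose proof the paper omits); the paper's argument is computationally heavier but self-contained and, incidentally, identifies the decomposing congruence as $\mathcal{J}^{+}$.

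For the converse the two arguments are essentially the same: the paper verifies $e+ef+e=e$ and $e+fe+e=e$ directly from the rectangular-ring components, which is exactly the content of the implication (ii)$\Rightarrow$(iii) in Lemma~3.1 that you quote.
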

\begin{proof}
First suppose that $S$ be a band orthoring. Then $\mathcal{J}^{+}$ is a b-lattice congruence on $S$ \cite{SMScr}.
Now for $a,b\in S$, we have $ab = (a^{o} + a)b = a^{o}b + ab = a^{o}(b+b^{o})+ ab = a^{o}b +a^{o}b^{o} + ab = a^{o}b +a^{o}b^{o} +b^{o}a^{o} +a^{o}b^{o} + ab = a^{o}b +a^{o}b^{o} + (b+b^{'})(a+a^{'})+ a^{o}b^{o}+ ab = x + ba + y$ for some $x, y \in S$.  Similarly, $ba = u + ab + v $ for some $ u,v \in S$. Therefore $ab\mathrel\mathcal{J}^{+}ba$.

Also $a+ab=a+a'+a+ab $ and $a=a+a'+a=a+a'+(a+a')(b+b')+(a+a')+a=a+a'+ab+ab'+a'b+a'b'+a=a'+(a+ab)+ab'+a'b+a'b'+a$. Therefore a$\mathrel\mathcal{J}^+a+ab$ and hence $\mathcal{J}^{+}$ is a distributive lattice congruence on $S$. Now additive reduct of each $\mathcal{J}^+$ class is an orthodox completely regular semigroup. Therefore additive reduct of each $\mathcal{J}^+$ class is a rectangular group. Also each $\mathcal{J}^+$ class is a completely regular semiring. So  each $\mathcal{J}^{+}$-class is a rectangular ring. So $S$ is a distributive lattice of rectangular rings.

Conversely, suppose that $S$ be a distributive lattice $D$ of rectangular rings $\left\{S_{\alpha}\right\}_{\alpha \in D }$.  Let $e, f \in E^{+}(S)$. Suppose $e\in S_{\alpha}$ and $f\in S_{\beta}$ for some $\alpha,\beta \in D$. Then $e + ef, e+ fe \in S_{\alpha + \alpha \beta} = S_\alpha$. Since additive reduct $(S_{\alpha}, +)$ is a rectangular idempotent semiring, $e + ef + e = e $ and $e + fe + e = e$. So $S$ is a band orthoring.
\end{proof}

A semiring $S$ is said to be a left ring if it is a direct product of a left zero idempotent semiring and a ring.

\begin{theorem}
A completely regular semiring $S$ is a left band orthoring if and only if it is a distributive lattice of left rings.
\end{theorem}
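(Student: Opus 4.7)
The approach is to piggyback on the previous theorem, which already identifies band orthorings with distributive lattices of rectangular rings, and then refine the ``rectangular'' part to ``left zero'' on both sides by exploiting the hypothesis about $E^+(S)$.

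For the forward direction, since $\lbi \subseteq \bi$ (by Lemma~\ref{bi=x+yxy+x}, as $x + yxy + x = x$ forces $x + xy + x = x$ and $x + yx + x = x$), a left band orthoring is in particular a band orthoring. Thus the preceding theorem decomposes $S$ via $\jp$ as a distributive lattice $D$ of rectangular rings $\{S_\alpha\}_{\alpha \in D}$, with each $S_\alpha \cong B_\alpha \times R_\alpha$ where $B_\alpha$ is a rectangular idempotent semiring and $R_\alpha$ is a ring. The key linking step is to identify $\jp|_{E^+(S)}$ with $\dpp$ on $E^+(S)$: since $E^+(S)$ is a band semiring, $\dpp$ is its least distributive lattice congruence (Theorem~\ref{dpp least}), forcing $\dpp \subseteq \jp|_{E^+(S)}$; conversely each $\jp$-class restricted to $E^+(S)$ is $E^+(S_\alpha) \cong B_\alpha$, a rectangular idempotent semiring, hence a single $\dpp$-class. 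So $\dpp = \jp|_{E^+(S)}$, and the $\dpp$-classes of $E^+(S)$ are precisely the $B_\alpha$'s.

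Now the left band semiring hypothesis enters: by the proof of Theorem~\ref{lp least lat cong} (step $(5) \Rightarrow (1)$, where $\dpp = \lp$ is established on any left band semiring), each $\dpp$-class of $E^+(S)$ is an $\lp$-class, and the latter is a left zero idempotent semiring (by definition of the decomposition $\lbi = \lzp \circ \vd$). Thus each $B_\alpha$ is a left zero idempotent semiring, making $S_\alpha \cong B_\alpha \times R_\alpha$ a left ring, and $S$ a distributive lattice of left rings. For the converse, if $S$ is a distributive lattice of left rings $\{S_\alpha = L_\alpha \times R_\alpha\}_{\alpha \in D}$, each $S_\alpha$ is a rectangular ring, so the previous theorem makes $S$ a band orthoring; in particular $E^+(S)$ is a subsemiring. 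The distributive lattice congruence on $S$ restricts to one on $E^+(S)$ whose classes are $E^+(S_\alpha) = L_\alpha \times \{0_{R_\alpha}\} \cong L_\alpha$, each a left zero idempotent semiring. Hence $E^+(S) \in \lzp \circ \vd = \lbi$, so $S$ is a left band orthoring.

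The main obstacle is the congruence bookkeeping in the forward direction, specifically verifying that the global $\jp$-decomposition of the band orthoring $S$ restricts on $E^+(S)$ to exactly the $\dpp$-decomposition of the band semiring $E^+(S)$; once this identification is in place, the left band hypothesis transfers via the coincidence $\dpp = \lp$ on left band semirings to force each rectangular component $B_\alpha$ to collapse to a left zero idempotent semiring. The converse is essentially bookkeeping once one notes that $E^+$ of a direct product $L \times R$ (with $R$ a ring) is $L \times \{0\}$.
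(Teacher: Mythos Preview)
The paper omits the proof of this theorem entirely, presumably because it is analogous to the immediately preceding theorem on band orthorings. Your argument is correct and is the natural way to fill in the omission: reduce to that theorem to obtain the $\jp$-decomposition into rectangular rings, identify $\jp|_{E^+(S)}$ with $\dpp$ on the band semiring $E^+(S)$, and then invoke $\dpp = \lp$ on the left band semiring $E^+(S)$ to force each rectangular idempotent component $B_\alpha$ to be a left zero idempotent semiring, hence each $S_\alpha$ a left ring; the converse via $E^+(S) \in \lzp \circ \vd = \lbi$ is likewise sound. One small clarification worth making explicit: the reason each $\lp$-class of $E^+(S)$ is a left zero \emph{idempotent semiring} (not merely a left zero band additively) is that $\lp = \dpp$ is a semiring congruence here, so the classes are subsemirings; your appeal to $\lbi = \lzp \circ \vd$ covers this, but it is the point where the argument could be read too quickly.
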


A semigroup is a semilattice of groups if and only if it is a strong semilattice of groups. Such a semigroup is called Clifford semigroup. But the situation is not the same in case of semirings which are distributive lattices of rings. For every $ a\in S$, denote $V^+(a)=\{x \in S \mid a=a+x+a \; \textrm{and} \; x=x+a+x\}$. A semiring $S$ is a strong distributive lattice of rings if $(S, +)$ is abelian and $S$ satisfies the following conditions: for every $a, b \in S$, there exists $a' \in V^+(a)$ and $b' \in V^+(b)$ such that
\begin{eqnarray}
a          &= a+a'+a                           \label{4.1}        \\
a(a+a')    &= a+a'                             \label{4.2}        \\
a(b+b')    &= (b+b')a                          \label{4.3}        \\
a+a(b+b')  &= a                                \label{4.4}        \\
E^+(S) \; & \textrm{is a $k$-ideal of $S$}       \label{4.5}
\end{eqnarray}

A semiring which is a strong distributive lattice of rings is known as \emph{Clifford semiring} \cite{gho}, \cite{SMSclifford}. It is easy to check that a semiring $S$ is a distributive lattice of rings if $(S, +)$ is abelian and it satisfies the equations (\ref{4.1}) -- (\ref{4.4}). We call a semiring \emph{weak Clifford} if it is a distributive lattice of rings. Every weak Clifford semiring $S$ is an orthoring and $E^+(S)$ is a distributive lattice.

We left the checking that the equivalence relation $\nu^+$ defined on an orthoring $S$ by: for every $a, b \in S$,
$$ a \nu^+ b \; \textrm{if} \; a=a^o+b+a^o \; \textrm{and} \; b=b^o+a+b^o  $$
is the least $b$-lattice of rings congruence on $S$. In fact, this is the least weak Clifford semiring congruence on $S$, which we show in the following result.

\begin{theorem}  \label{band orthoring}
Every band orthoring is a spined product of a band semiring and a weak Clifford semiring with respect to a distributive lattice and conversely.
\end{theorem}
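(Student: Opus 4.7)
\emph{Forward direction.} The plan is to construct an explicit isomorphism between a band orthoring $S$ and the spined product of the band semiring $E^+(S)$ and the weak Clifford semiring $S/\nu^+$ over the distributive lattice $D = S/\jp$. By Theorem 4.2, $\jp$ is the least distributive lattice congruence on $S$, and the projection $\pi \colon S \to D$ restricts to a surjection $\pi_B \colon E^+(S) \to D$ (every $\jp$-class contains the idempotent $a^o$). Since a distributive lattice is trivially a weak Clifford semiring, $\nu^+ \subseteq \jp$, and so $\pi$ factors through a surjection $\pi_C \colon S/\nu^+ \to D$. Define
\[
\Phi \colon S \to E^+(S) \times_D (S/\nu^+), \qquad \Phi(a) = (a^o,\, [a]_{\nu^+}).
\]
Since $a \mathrel{\jp} a^o$, $\Phi$ lands in the spined product.

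Next I would verify that $\Phi$ is a semiring homomorphism. Because $(S, +)$ is a band of abelian groups with idempotent transversal $E^+(S)$, the $\hp$-class of $a + b$ has identity $a^o + b^o$ and that of $ab$ has identity $a^o b^o$, giving $(a+b)^o = a^o + b^o$ and $(ab)^o = a^o b^o$; both operations are preserved. For injectivity, $\Phi(a) = \Phi(b)$ forces $a^o = b^o$ together with $a = a^o + b + a^o$; but $a$ and $b$ then lie in the abelian group with identity $a^o$, so this collapses to $a = b$.

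The main step is surjectivity. Given $(e, [c]_{\nu^+})$ in the spined product, the compatibility condition $\pi_B(e) = \pi_C([c]_{\nu^+})$ forces $e$ and $c$ to lie in a common rectangular ring $R_\alpha$. I would take $a = e + c + e$ and verify that $\Phi(a) = (e, [c]_{\nu^+})$. For $a^o$, the identity $(x+y)^o = x^o + y^o$ yields $a^o = e + c^o + e$, which collapses to $e$ because $E^+(S)$ is a band semiring whose $\dpp$-classes are rectangular bands. One half of $a \mathrel{\nu^+} c$, namely $a = a^o + c + a^o$, is immediate from the definition of $a$; the other half $c = c^o + a + c^o$ is the chief technical point. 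The cleanest way is to exploit the direct-product decomposition $R_\alpha \cong E_\alpha \times T_\alpha$ guaranteed by the definition of a rectangular ring: writing $e = (e_1, 0)$ and $c = (c_1, t)$, coordinate-wise computation gives $a = (e_1, t)$ and then $c^o + a + c^o = (c_1 + e_1 + c_1,\, 0 + t + 0) = (c_1, t) = c$.

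\emph{Converse.} Let $S' = B \times_D C$ where $B$ is a band semiring, $C$ is a weak Clifford semiring, and $D$ is their common distributive lattice image. For $(b, c) \in S'$ and any $c' \in V^+(c)$ in the ring of $C$ containing $c$, the pair $(b, c')$ again lies in $S'$ and serves as the additive inverse of $(b, c)$, so $S'$ is completely regular. Its additive idempotents are exactly the pairs $(b, e)$ with $e \in E^+(C) = D$ and $\pi_B(b) = e$; the second coordinate being determined by the first, the projection $(b, e) \mapsto b$ is a semiring isomorphism $E^+(S') \to B$. Hence $E^+(S')$ is a band semiring and $S'$ is a band orthoring. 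I expect the main obstacle in the whole argument to be the surjectivity identity $c = c^o + e + c + e + c^o$, which is transparent through the direct-product decomposition of a rectangular ring but would otherwise require a delicate application of equation~(2.4) together with the rectangular-band identities and the $\hp$-class arithmetic.
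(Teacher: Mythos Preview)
Your proof is correct and follows the same overall route as the paper: both realise $S$ as the spined product of $S/\hp$ (equivalently its idempotent transversal $E^+(S)$) and $S/\nu^+$ over $S/\dpp$. A few points of comparison are worth recording. First, the paper establishes inside the proof that $S/\nu^+$ is genuinely a \emph{weak Clifford} semiring (not merely a b-lattice of rings) by checking $ab^{o}\mathrel{\nu^+}b^{o}a$ and $(a+ab^{o})\mathrel{\nu^+}a$; you take this for granted, so you should either insert that computation or cite it. Second, for surjectivity the paper appeals to the congruence factorisation $\dpp=\hp\circ\nu^+$, whereas you build the preimage $a=e+c+e$ by hand and verify the two $\nu^+$-conditions via the direct-product coordinates of the rectangular ring $R_\alpha$; your argument is more explicit and in effect \emph{proves} that factorisation. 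Third, you treat the converse direction, which the paper's proof omits entirely; your argument there is fine, and the identification $E^+(B\times_D C)\cong B$ is exactly the right observation.
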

\begin{proof}
Suppose $S$ be a band orthoring. Then $\mathcal{H}^+$ is a band semiring congruence. Again $\nu^+$ is a b-lattice of rings congruence in an orthoring. Also in $S$, we have $(ab^o)^o+b^oa+(ab^o)^o=a^ob^o+b^oa^o+a^ob^o=a^ob^o=ab^o$ and $(b^oa)^o+ab^o+(b^oa)^o=b^oa$. Thus $ab^o\mathrel\nu^+ b^oa$. In addition we have $a^o+a+ab^o+a^o=a+a^o+a^ob^o+a^o=a+a^o=a$ and $(a+ab^o)^o+a+(a+ab^o)^o=a^o+a^ob^o+a+a^o+a^ob^o=a^o+a^ob^o+a^o+a+a^ob^o=a^o+a+a^ob^o=a+ab^o$ in $S$. This implies that $(a+ab^o)\nu^+ a$. Therefore $\nu^+$ is a weak Clifford semiring congruence on $S$. Denote $S_1=S/\mathcal{H}^+$ and $S_2=S/\nu^+$. Also $T=S/\mathcal{D}^+$ is a distributive lattice. Since $\mathcal{H}^+$, $\nu^+ \subseteq \mathcal{D}^+$, it follows that $\phi_1:S_1\longrightarrow T$ and $\phi_2:S_2\longrightarrow T$ defined by $\phi_1(\tilde{a})=\bar{a}$ and $\phi_2(\tilde{\tilde {a}})=\bar{a}$ are well defined surjective homomorphisms, where $\tilde {a} \,[ \tilde{\tilde {a}}, \bar{a} ]$ is the $\mathcal{H}^+$ [ $\nu^+$, $\mathcal{D}^+ ]$ -class containing $a$. Thus $R = \{(\tilde{a}, \tilde{\tilde{b}}) \in S_1 \times S_2 \mid a \mathrel\mathcal{D}^+ b \}$ is a spined product of $S_1$ and $S_2$ with respect to $T$.

Then the mapping   $\theta:S\longrightarrow R$ defined by $\theta(a)=(\tilde{a}, \tilde{\tilde{a}})$ is a monomorphism. Also if $( \tilde{a}, \tilde{\tilde{b}} ) \in R$, then we have $a \mathrel\mathcal{D}^+ b$. Since $\mathcal{D}^+=\mathcal{H}^+o \nu^+$, there exists $c\in S$ such that $a\mathrel\mathcal{H}^+ c$ and $c \mathrel\nu^+ b$. This implies that $\theta(c)=( \tilde{a}, \tilde{\tilde{b}} )$. Thus $\theta$ is onto.
\end{proof}

Similarly we have the following generalization.
\begin{theorem}
A semiring $S$ is a left band orthoring if and only if $S$ is a spined product of a left band semiring and a weak Clifford semiring with respect to a distributive lattice.
\end{theorem}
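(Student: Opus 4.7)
The plan is to reduce to Theorem \ref{band orthoring} by first noting that $\lbi\subseteq\bi$: the identity $x = x+yxy$ defining $\lbi$ yields $x+yxy+x = x+x = x$, so every left band orthoring is in particular a band orthoring, and the whole machinery of Theorem \ref{band orthoring} is available.

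For the forward direction, let $S$ be a left band orthoring. By Theorem \ref{band orthoring}, $S$ is the spined product of $S_1 = S/\hp$, $S_2 = S/\nu^+$ and $T = S/\dpp$, with $S_2$ a weak Clifford semiring and $T$ a distributive lattice. Since $\hp$ is the least idempotent semiring congruence on the completely regular semiring $S$, the assignment $\tilde{a}\mapsto a^o$ is a semiring isomorphism $S/\hp\to E^+(S)$. By hypothesis $E^+(S)$ is a left band semiring, so $S_1\in\lbi$, which upgrades the ``band semiring'' factor of Theorem \ref{band orthoring} to a left band semiring and gives the required decomposition.

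For the converse, assume $S$ is the spined product of a left band semiring $B$ and a weak Clifford semiring $W$ over a distributive lattice $D$, with structure maps $\phi_B:B\to D$ and $\phi_W:W\to D$. Since $\lbi\subseteq\bi$, Theorem \ref{band orthoring} yields that $S$ is a band orthoring, and it remains to verify $E^+(S)\in\lbi$. Writing $W = \bigcup_{d\in D}R_d$ as a distributive lattice of rings, each summand $R_d$ contributes exactly one additive idempotent, namely the zero $0_d$ of $R_d$, and the assignment $d\mapsto 0_d$ is a semiring isomorphism $D\to E^+(W)$. In the pullback $S = \{(b,w)\in B\times W:\phi_B(b) = \phi_W(w)\}$, the set of additive idempotents is $E^+(S) = \{(b,0_d):\phi_B(b) = d\}$. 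Since each $b\in B$ uniquely determines $d = \phi_B(b)$, the first projection is a semiring isomorphism $E^+(S)\to B$, and hence $E^+(S)$ is a left band semiring.

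The main obstacle is the converse, namely reading off $E^+(S)$ from the spined product. The key input is the identification of the additive idempotents of a weak Clifford semiring with its underlying distributive lattice; once this is in hand, the idempotent part of the pullback collapses onto $B$, and the left band semiring structure of $E^+(S)$ is inherited from $B$.
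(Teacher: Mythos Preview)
Your proof is correct and aligns with the paper's intent: the paper offers no separate argument for this theorem, merely indicating that it follows ``similarly'' from Theorem~\ref{band orthoring}. Your reduction---invoking Theorem~\ref{band orthoring} directly and then identifying the factor $S/\hp$ with $E^+(S)$ to upgrade ``band semiring'' to ``left band semiring'', and for the converse reading off $E^+(S)\cong B$ from the pullback description---is a clean way to carry out that ``similarly'', and arguably tidier than reproving Theorem~\ref{band orthoring} with ``left'' inserted throughout.
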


On a band orthoring, we define two equivalence relations $\lambda_1$ and $\lambda_2$ by: for all $a$, $b\in S$,
\begin{align*}
                   & a\mathrel\lambda_1b \hspace{0.2cm}\mbox{if}\hspace{0.2cm} a=a^o+b\hspace{0.2cm} \mbox{and} \hspace{0.2cm} b=b^o+a;\\
\textrm{and} \; \; & a\mathrel\lambda_2b \hspace{0.2cm}\mbox{if} \hspace{0.2cm}a=b+a^o\hspace{0.2cm} \mbox{and} \hspace{0.2cm} b=a+b^o.
\end{align*}
Then we have the following theorem.
\begin{theorem}                               \label{band orthoring lrc}
Every band orthoring is a spined product of a left band orthoring and a right band orthoring with respect to a weak Clifford semiring.
\end{theorem}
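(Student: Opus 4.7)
The strategy mirrors that of Theorem~\ref{band orthoring} with the congruence triple $(\hp,\nu^+,\dpp)$ replaced by $(\lambda_2,\lambda_1,\nu^+)$: I will express $S$ as a spined product of $S/\lambda_2$ and $S/\lambda_1$ over $S/\nu^+$. The key observations underpinning everything are that $x\mapsto x^o$ is a semiring homomorphism $S\to E^+(S)$, so $(x+y)^o=x^o+y^o$ and, by (2.5), $xx^o=x^o$; and that the restrictions of $\lambda_1$ and $\lambda_2$ to $E^+(S)$ are $\lp$ and $\rp$ respectively.

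The first step is to verify that $\lambda_1$ and $\lambda_2$ are congruences on $S$. Reflexivity of $\lambda_1$ uses $a^o+a=a$; symmetry is built in; transitivity and additive compatibility reduce, via $(x+y)^o=x^o+y^o$, to the fact that $\lp$ is a congruence on the band semiring $E^+(S)$; multiplicative compatibility uses $xx^o=x^o$ together with distributivity to push zeros through products. The argument for $\lambda_2$ is dual. Next, $\lambda_1\cap\lambda_2=\Delta$: if $a\lambda_1 b$ and $a\lambda_2 b$, applying $x\mapsto x^o$ to $a=a^o+b$ and $b=a+b^o$ gives $a^o=a^o+b^o=b^o$, whence $a=a^o+b=b^o+b=b$.

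I would then identify the quotients. Complete regularity is inherited by $S/\lambda_i$, and $E^+(S/\lambda_1)\cong E^+(S)/\lp$, which is a right band semiring by Theorem~\ref{dp=lp join rp} (once $\lp$ is killed, $\dpp=\lp\vee\rp$ reduces to $\rp$, so $\rp$ is the least distributive lattice congruence in the quotient). Hence $S/\lambda_1$ is a right band orthoring; dually $S/\lambda_2$ is a left band orthoring. Also $\lambda_1,\lambda_2\subseteq\nu^+$: if $a\lambda_1 b$ then $a^o+b+a^o=a+a^o=a$ and $b^o+a+b^o=b+b^o=b$, so $a\nu^+ b$. This yields natural surjective homomorphisms from $S/\lambda_i$ onto the weak Clifford semiring $S/\nu^+$ (shown weak Clifford in the proof of Theorem~\ref{band orthoring}).

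Finally, form the spined product $P=\{([a]_{\lambda_2},[b]_{\lambda_1}):a\nu^+ b\}$ and the natural map $\theta:S\to P$, $a\mapsto([a]_{\lambda_2},[a]_{\lambda_1})$, which is a homomorphism and is injective by the previous step. The main obstacle is surjectivity: given $a\nu^+ b$, one must produce $c\in S$ with $a\lambda_2 c$ and $c\lambda_1 b$. The natural candidate is $c=a+b$, whose zero $c^o=a^o+b^o$ satisfies $a^o\rp c^o\lp b^o$ in $E^+(S)$ (using $\dpp=\rp\circ\lp$ in the band $E^+(S)$ applied to $a^o\dpp b^o$, which is the restriction of $\nu^+$ to the zeros). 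Verifying the four defining equalities of $\lambda_2$ and $\lambda_1$ for this $c$ will lean on equation~(2.4), on $a=a^o+b+a^o$, and on $b=b^o+a+b^o$; this is the hardest part. A cleaner alternative bootstraps Theorem~\ref{band orthoring} and Theorem~\ref{dp=lp join rp}: writing $E^+(S)\cong L\times_D R$ with $L$ a left and $R$ a right band semiring over $D=S/\dpp$, and $S\cong E^+(S)\times_D(S/\nu^+)$, one rearranges to $S\cong (L\times_D S/\nu^+)\times_{S/\nu^+}(R\times_D S/\nu^+)$, and since $E^+(L\times_D S/\nu^+)\cong L$ the first factor is a left band orthoring, and similarly for the right.
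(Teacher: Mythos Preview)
Your congruence triple $(\lambda_2,\lambda_1,\nu^+)$ and the overall architecture match the paper's proof exactly, and your identification of the factors via $E^+(S/\lambda_i)\cong E^+(S)/\!\!\restriction$ is a legitimate alternative to the paper's direct verification of identity~(\ref{identity for lp}) on idempotents. The paper, however, is much terser: it observes that the additive reduct of a band orthoring is a \emph{regular orthogroup} (because the band of idempotents is regular, by \cite{Wang}) and then simply cites Petrich--Reilly \cite{PR vol I} for the fact that such a semigroup is a spined product of $S/\lambda_1$ and $S/\lambda_2$ over $S/\nu^+$. All the work you do by hand---that $\lambda_i$ are congruences, that $\lambda_1\cap\lambda_2=\Delta$, and especially the surjectivity step $\lambda_2\circ\lambda_1=\nu^+$---is absorbed into that citation.

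There is one genuine slip. Your candidate $c=a+b$ for surjectivity does \emph{not} work. Take $S=B\times R$ with $B$ a band semiring and $R$ a nontrivial ring, and let $a=(e,x)$, $b=(f,x)$ with $e\dpp f$ in $B$ and $x\neq 0$ in $R$; then $a\nu^+ b$, but $c+a^o=(e+f+e,\,2x)=(e,2x)\neq a$. The correct interpolant is $c=a^o+b$, which (using $a=a^o+b+a^o$ and $b=b^o+a+b^o$) one checks also equals $a+b^o$; then $c+a^o=a^o+b+a^o=a$, $a+c^o=a+a^o+b^o=a+b^o=c$, $c^o+b=a^o+b^o+b=a^o+b=c$, and $b^o+c=b^o+a^o+b=b^o+a+b^o=b$ (the last because $b^o+a^o+b^o=b^o$). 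So the direct route is salvageable with this small correction.

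Your bootstrap alternative via Theorems~\ref{band orthoring} and~\ref{dp=lp join rp} is correct and is a genuinely different argument from the paper's: it avoids both the explicit interpolant and the appeal to orthogroup theory, at the cost of relying on the earlier spined-product theorem. The associativity-style rearrangement $(L\times_D R)\times_D W\cong(L\times_D W)\times_W(R\times_D W)$ is valid, and $E^+(L\times_D W)\cong L\times_D E^+(W)\cong L\times_D D\cong L$ confirms the first factor is a left band orthoring.
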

\begin{proof}
Let $S$ be a band orthoring. Since additive reduct of a band semiring is a regular band \cite{Wang}, $S$ is a regular orthoring. So $S$ is a spined product of $S/\lambda_{1}$ and $S/\lambda_{2}$. Suppose $S_{1} = S/\lambda_{2}$ and $ S_{2} = S/\lambda_{1}$ \cite{PR vol I}. Let $e\lambda_{2}, f\lambda_{2}\in E^{+}(S_{1})$. Then $e + fef + e^{0} = e + fef + e = e$ and $e + (e + fef)^{0} = e + fef$. Therefore $e \lambda_{2} (e+fef)$ and hence $E^{+}(S_{1}) \in \mathbf{LBI}$. Similarly $E^{+}(S_{2})\in \mathbf{RBI}$. Also, we have $\lambda_{1} o \lambda_2=\lambda_2 o \lambda_1= \nu ^+$. Since $\nu ^+$ is a weak Clifford semiring congruence on $S$, $S$ is spined product of a left band orthoring  and a right band orthoring with respect to a weak Clifford semiring.
\end{proof}

\begin{theorem}
An orthoring $S$ is a subdirect product of a band semiring and a ring if and only if $S$ satisfies
\begin{itemize}
\item[(i)]
$a(a+a')=a+a'$;
\item[(ii)]
$E^+(S)$ is a k-ideal of $S$;
\item[(iii)]
$(a+a')+b(a+a')b+(a+a')=(a+a')$.
\end{itemize}
\end{theorem}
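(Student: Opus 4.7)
My plan is to establish both directions by relating the structure of $S$ to the subdirect product $B \times R$, where $B$ is a band semiring and $R$ is a ring.

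For the necessity, assume $S$ embeds subdirectly in $B \times R$. For $a = (a_1, a_2) \in B \times R$ one has $a' = (a_1, -a_2)$ and $a+a' = (a_1, 0)$, since addition in $B$ is idempotent and $R$ is a ring. I verify the three identities componentwise. Identity (i) reduces in the first coordinate to $a_1 \cdot a_1 = a_1$ (idempotency in $B$) and in the second to $a_2 \cdot 0 = 0$. Identity (iii) reduces in the first coordinate to $a_1 + b_1 a_1 b_1 + a_1 = a_1$, which is the band semiring identity of Lemma~\ref{bi=x+yxy+x}, and is trivial in $R$. For (ii), $E^+(B \times R) = B \times \{0\}$ is a $k$-ideal of $B \times R$, and this property descends to the subsemiring $S$.

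For the sufficiency, assume $S$ is an orthoring satisfying (i)--(iii). Specialising (iii) to $b \in E^+(S)$ yields $e + beb + e = e$ for all $e, b \in E^+(S)$, so by Lemma~\ref{bi=x+yxy+x}, $E^+(S) \in \bi$. Hence $S$ is a band orthoring and $\rho := \hp$ is a semiring congruence on $S$ with $S/\hp \cong E^+(S)$, which supplies the band semiring factor.

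For the ring factor I use (ii) to introduce the Bourne-style congruence $\sigma$ on $S$: let $\sigma$ be the congruence generated by the relation $a \sim b$ if $a+e = b+f$ for some $e, f \in E^+(S)$. In $S/\sigma$, every element of $E^+(S)$ collapses to a common class $[0]$, and $[a] + [a'] = [a^o] = [0]$ supplies additive inverses. Additive commutativity in the quotient follows from equation~(2.4), namely $a + b^o + a^o + b = a^o + b + a + b^o$: passing modulo $\sigma$, the idempotent summands vanish and $[a+b] = [b+a]$. Hence $S/\sigma$ is a ring.

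The decisive step will be $\rho \cap \sigma = \Delta$. Suppose $a \mathrel{\hp} b$ and $a+e = b+f$ with $e, f \in E^+(S)$; then $a, b$ lie in a common $\hp$-class $R_\alpha$ (which is a ring) with $a^o = b^o$, and I aim to derive $a + b' = a^o$ in $R_\alpha$, which forces $a = b$. Condition (iii) plays the essential role here: multiplying $a+e = b+f$ by $a^o$ on either side and using $a a^o = a^o a = a^o$ and $b a^o = a^o b = a^o$ in $R_\alpha$ reduces the equation to identities inside $E^+(S)$; then iterating with (iii) absorbs the contributions of $e$ and $f$ and isolates $a + b' = a^o$. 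I expect this final step to be the main obstacle, because condition (iii) is precisely the identity that prevents the band and ring components of $S$ from interfering, and without it the intersection of $\rho$ and $\sigma$ would generally exceed $\Delta$.
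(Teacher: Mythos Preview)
Your necessity direction is fine and matches the paper's componentwise calculation. The real difference is in the sufficiency direction, and there your proof has a genuine gap.

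The paper does not construct the ring congruence by hand. Instead it invokes the known fact (due to Pastijn--Guo, used elsewhere in the paper as Theorem~3.3 of \cite{pastijn-guo}) that an orthoring is a subdirect product of an idempotent semiring $T$ and a ring $R$ if and only if conditions (i) and (ii) hold. Granting that, the only remaining work is to show that $T\in\bi$ is equivalent to (iii), and this is a two-line coordinatewise computation: writing $a=(e,x)$, $b=(f,y)$ in $T\times R$, condition (iii) collapses in the second coordinate and in the first coordinate becomes exactly $e+fef+e=e$, the identity of Lemma~\ref{bi=x+yxy+x}. Conversely, given $e,f\in T$ one lifts them to elements of $S$ (subdirectness) and reads off $e+fef+e=e$ from (iii). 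So the paper's proof of the theorem is essentially just this equivalence for (iii); all of the substantial subdirect-product machinery is outsourced.

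You, by contrast, attempt to build both congruences from scratch: $\rho=\hp$ for the band-semiring factor and a Bourne congruence $\sigma$ for the ring factor. The crucial step $\rho\cap\sigma=\Delta$ is not proved; you only outline a plan (``multiplying $a+e=b+f$ by $a^{o}$ on either side \ldots then iterating with (iii) \ldots'') and even flag it yourself as ``the main obstacle''. As written, that plan does not work: conjugating by $a^{o}$ sends $a$, $b$, $e$, $f$ all into $E^{+}(S)$ and so destroys exactly the information you need to conclude $a=b$. Moreover, you take $\sigma$ to be the congruence \emph{generated} by $\sim$, yet in the last paragraph you only treat a single $\sim$-step, not a chain; and the role of the $k$-ideal hypothesis (ii) ---which is precisely what makes the Bourne quotient a ring and makes $\rho\cap\sigma$ trivial--- is asserted but never used in an argument. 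These are the places where the real content of the cited Pastijn--Guo result lives, and your sketch does not recover it.

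In short: your route is more ambitious (a self-contained construction of the subdirect decomposition), but the decisive step is missing. The paper's route is shorter because it isolates condition (iii) as the only new ingredient and defers (i)--(ii) to the literature.
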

\begin{proof}
An orthoring $S$ is a subdirect product of an idempotent semiring $T$ and a ring $R$ if and only if $S$ satisfies (i) and (ii). We now show that $T\in \mathbf{BI}$ if and only if $S$ satisfies (iii). First suppose that $T\in \mathbf{BI}$. Let $a,b\in S$. Then $a=(e,x), b=(f,y)$  for some $e,f\in T$ and $x,y\in R$.

Now, $(a+a')+b(a+a')b+(a+a')=((e,x)+(e,-x))+(f,y)((e,x)+(e,-x))(f,y)+((e,x)+(e,-x))
=(e,0_R)+(f,y)(e,0_R)(f,y)+(e,0_R)=(e+fef+e,0_R)=(e,0_R)=(e,x)+(e,-x)=a+a'$.

Conversely, suppose that $S$ satisfies (iii). Let $e,f\in T$. Then there exists $x,y\in R$ such that $(e,x),(f,y)\in S$. So,
\begin{align*}
             &((e,x)+(e,-x))+(f,y)((e,x)+(e,-x))(f,y)+((e,x)+(e,-x))=(e,x)+(e,-x)\\
\Rightarrow  &(e+fef+e,0_R)=(e,0_R)\\
\Rightarrow  &e+fef+e=e.
\end{align*}
\end{proof}

\begin{corollary}
An orthoring $S$ is a subdirect product of a left quasi band semiring and a ring if and only if $S$ satisfies
\begin{itemize}
\item[(i)]
$a(a+a')=a+a'$;
\item[(ii)]
$(a+a')+(a+a')(b+b')+(a+a')=(a+a')$;
\item[(iii)]
$a\in S, a+b=b $ for some $b\in S$ implies $a+a=a$.
\end{itemize}
\end{corollary}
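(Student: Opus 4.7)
The proof will follow the structure of the preceding theorem, substituting $\mathbf{LQBI}$ for $\mathbf{BI}$ and condition (iii) of the corollary for the $k$-ideal hypothesis of that theorem.

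For the forward direction, I would embed $S \hookrightarrow T \times R$ with $T \in \mathbf{LQBI}$ and $R$ a ring via the subdirect projections, and write each $a \in S$ as a pair $(e_a, x_a)$; then $a + a' = (e_a, 0_R)$. Condition (i) follows from $e_a \cdot e_a = e_a$ in the idempotent $T$. Condition (ii) reduces to $(e_a + e_a e_b + e_a, \, 0_R) = (e_a, 0_R)$, which is the defining $\mathbf{LQBI}$ identity $x + xy + x = x$ applied in the first coordinate. Condition (iii) follows because $a + b = b$ projects to $x_a + x_b = x_b$ in the ring $R$, so $x_a = 0_R$ and $a = (e_a, 0_R) \in E^+(S)$, giving $a + a = a$.

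For the converse, suppose $S$ is an orthoring satisfying (i), (ii), (iii). I would invoke the framework of the preceding theorem: an orthoring is a subdirect product of an idempotent semiring and a ring whenever it satisfies (i) and $E^+(S)$ is a $k$-ideal. The pivotal intermediate step is to deduce the $k$-ideal property from (iii): given $e \in E^+(S)$ and $f := a + e \in E^+(S)$, I would show that $a + f = f$, after which (iii) applied with $b = f$ yields $a + a = a$ and hence $a \in E^+(S)$. Once the subdirect decomposition $S \hookrightarrow T \times R$ is in place, condition (ii) asserts the $\mathbf{LQBI}$ identity on the representatives $a + a'$ of $E^+(S)$; since every element of the idempotent factor $T \cong S/\mathcal{H}^+$ is of this form, it follows that $T \in \mathbf{LQBI}$.

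The main obstacle is the equality $a + (a + e) = a + e$ in a general orthoring, where $(S, +)$ is only locally commutative (each $\mathcal{H}^+$-class being an abelian group). The plan is to work inside the abelian group $G_\gamma$ containing $f = a + e$: since $f$ is the zero of $G_\gamma$ and $a$ lifts to $\hat{a} := a + 0_\gamma \in G_\gamma$ (which is well-defined because $\mathcal{H}^+$ is a congruence on $S$), the relation $a + e = f = 0_\gamma$ together with the analogous lift of $e$ forces $\hat{a}$ to be the additive inverse of the lift of $e$ in $G_\gamma$; hence $a + f = \hat{a} + 0_\gamma = \hat{a} + (\hat{a})' + \hat{a} \in E^+(S)$ coincides with $f$. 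With this local cancellation in hand, (iii) applies with $b = f$, and the $k$-ideal property of $E^+(S)$ follows.
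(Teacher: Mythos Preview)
Your overall strategy aligns with the paper's implicit approach: this corollary is obtained from the preceding theorem by trading the $k$-ideal hypothesis for condition (iii) and the $\mathbf{BI}$ identity for the $\mathbf{LQBI}$ one. The forward direction is correct, and you have rightly isolated the crux of the converse as deducing the $k$-ideal property of $E^+(S)$ from (iii).

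The gap lies in your argument that $a+f=f$, where $f=a+e$ with $e,f\in E^+(S)$. Lifting $e$ from the left, $e+f$, need not land in $G_\gamma$: in the band $S/\mathcal H^+$ one has $[e]+[f]=[e]+[a]+[e]$, and this equals $[f]=[a]+[e]$ only under a right-regularity condition on the band that is not available here. Lifting from the right gives $f+e=a+e+e=f$, the zero of $G_\gamma$, so the assertion that $\hat a$ is the inverse of $\hat e$ collapses to $\hat a=f$, which is precisely what must be shown. In short, the relation $a+e=f$ in $S$ does not transport to $\hat a+\hat e=f$ inside $G_\gamma$, and your final displayed chain $\hat a+(\hat a)'+\hat a\in E^+(S)$ merely says $\hat a=\hat a$.

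A clean repair is to replace $a$ by $a'$. Compute $a'+f=a'+a+e=a^o+e$; orthodoxy places $a^o+e$ in $E^+(S)$, and $[a^o+e]=[a]+[e]=[f]$ places it in $G_\gamma$, so $a'+f$ is an idempotent of $G_\gamma$ and therefore equals $f$. Condition (iii) applied to $a'$ with $b=f$ now yields $a'\in E^+(S)$, whence $a'=(a')^o=a^o$ and $a=(a')'=a^o\in E^+(S)$. With the $k$-ideal property of $E^+(S)$ established, the remainder of your converse goes through exactly as you outlined.
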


\begin{corollary}
An orthoring $S$ is a subdirect product of a left band semiring and a ring if and only if $S$ satisfies
\begin{itemize}
\item[(i)]
$a(a+a')=a+a'$;
\item[(ii)]
$b(a+a')b+(a+a')=(a+a')$;
\item[(iii)]
$a\in S, a+b=b $ for some $b\in S$ implies $a+a=a$.
\end{itemize}
\end{corollary}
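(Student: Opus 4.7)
The plan is to parallel the proof of the preceding theorem on band semirings and the accompanying corollary on left quasi band semirings. Following that template, conditions (i) and (iii) together will realize $S$ as a subdirect product of an idempotent semiring $T$ with a ring $R$, while condition (ii) will be shown to translate into the defining identity of $\lbi$ on the factor $T$.

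The first task is the equivalence between condition (iii) and the $k$-ideal property of $E^+(S)$ used in the preceding theorem. For the forward direction, given $a+b=b$ one sets $b^{o} = b + b' \in E^+(S)$; then $a + b^{o} = (a+b) + b' = b + b' = b^{o} \in E^+(S)$, so $k$-ideality of $E^+(S)$ forces $a \in E^+(S)$, i.e.\ $a+a=a$. For the reverse direction, given $a \in S$, $b \in E^+(S)$, $a+b \in E^+(S)$, one produces an element $b_0 \in S$ with $a+b_0=b_0$ and applies (iii). This verification parallels the analogous step in the proof of the preceding left quasi band corollary and uses only the completely regular arithmetic (equations $(2.1)$--$(2.5)$) together with (i).

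With (i) and (iii) in hand, the preceding theorem supplies a subdirect embedding $\theta \colon S \hookrightarrow T \times R$ into an idempotent semiring $T$ and a ring $R$. Each $a \in S$ has the form $\theta(a) = (e_a, x_a)$, and $\theta(a+a') = (e_a, 0_R)$ since the ring component of $a+a'$ vanishes. A direct componentwise computation then yields
\[
\theta\bigl( b(a+a')b + (a+a') \bigr) \;=\; (e_b e_a e_b + e_a,\; 0_R).
\]
Since the projection $S \to T$ is surjective, condition (ii) holding in $S$ is equivalent to $e_b e_a e_b + e_a = e_a$ for all $e_a, e_b \in T$, which is the defining identity of $\lbi$ recorded immediately after Corollary~\ref{rp least lat cong}. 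Both directions of the corollary then follow: if $S$ is a subdirect product of an element of $\lbi$ and a ring, (i)--(iii) hold by the same computation run backwards.

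The main technical obstacle is the careful verification of the equivalence between (iii) and the $k$-ideal property on $E^+(S)$. Although routine, this step demands some juggling of the unary additive inverse with the additive structure in order to produce the required witness $b_0$ in the reverse direction, and is precisely the step whose analog is already carried out in the proof of the preceding left quasi band corollary.
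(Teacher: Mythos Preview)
The paper gives no explicit proof for this corollary; it is left as an immediate consequence of the preceding theorem and the left-quasi-band corollary, and your plan follows that template correctly. There is, however, a slip in the final identification: the identity you obtain in $T$, namely $e_b e_a e_b + e_a = e_a$ (that is, $yxy + x = x$), is the defining identity of $\rbi$ (equation~(\ref{identity for rp})), not of $\lbi$ (which is $x = x + yxy$, equation~(\ref{identity for lp})). Your componentwise computation is correct; what is off is the name of the variety this identity singles out.

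This discrepancy in fact reflects a typo in the statement of the corollary itself: for a \emph{left} band semiring factor, condition (ii) should read $(a+a') + b(a+a')b = (a+a')$, so that the translation into $T$ yields $e_a + e_b e_a e_b = e_a$. As currently stated, condition (ii) characterizes a \emph{right} band semiring factor. Your method of proof is sound either way; you need only match the identity to the correct variety (or, equivalently, correct the statement being proved).
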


\section{Variety of band orthorings}

Let us first fix the following symbols:
\begin{center}
\begin{tabular}{ll}
  % after \\: \hline or \cline{col1-col2} \cline{col3-col4} ...
   $\mathbf{Ri}$&\hspace{0.05 cm}   Variety of rings;\\
  $\mathbf{BOR}$&\hspace{0.05 cm}   Variety of band orthorings;\\
  $\mathbf{LBOR}$& \hspace{0.05 cm}  Variety of left band orthorings;\\
  $\mathbf{RBOR}$ &\hspace{0.05 cm}  Variety of right band orthorings.
\end{tabular}
\end{center}

Then the following result follows immediately from Theorem \ref{band orthoring lrc}.
\begin{theorem}
$\mathbf{BOR}=\mathbf{LBOR}\vee \mathbf{RBOR}$.
\end{theorem}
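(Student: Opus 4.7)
The plan is to deduce this equality directly from Theorem \ref{band orthoring lrc}, combining it with the standard closure properties of varieties. The statement has two inclusions; only one requires the structural theorem, the other is essentially a tautology.

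For the easy inclusion $\mathbf{LBOR}\vee\mathbf{RBOR}\subseteq \mathbf{BOR}$, I would first observe that $\mathbf{LBI}\subseteq \mathbf{BI}$ and $\mathbf{RBI}\subseteq \mathbf{BI}$ (this is noted in the excerpt right after Theorem \ref{dp=lp join rp}, where it is shown that $\mathbf{BI}=\mathbf{LBI}\vee\mathbf{RBI}$, and in particular $\mathbf{LBI},\mathbf{RBI}\subseteq \mathbf{BI}$). Consequently, if $S\in\mathbf{LBOR}$ then $E^+(S)\in\mathbf{LBI}\subseteq\mathbf{BI}$, so $S\in\mathbf{BOR}$; similarly for $\mathbf{RBOR}$. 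Since $\mathbf{BOR}$ is itself a variety and contains both $\mathbf{LBOR}$ and $\mathbf{RBOR}$, it must contain their join.

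For the reverse inclusion, let $S\in\mathbf{BOR}$. Applying Theorem \ref{band orthoring lrc}, there exist a left band orthoring $S_1$ and a right band orthoring $S_2$ such that $S$ embeds as a spined product of $S_1$ and $S_2$ with respect to a weak Clifford semiring. A spined product is, by construction, a subdirect product, hence a subalgebra of the direct product $S_1\times S_2$. Now $S_1\in\mathbf{LBOR}\subseteq\mathbf{LBOR}\vee\mathbf{RBOR}$ and $S_2\in\mathbf{RBOR}\subseteq\mathbf{LBOR}\vee\mathbf{RBOR}$, and the join of two varieties is closed under direct products and subalgebras; therefore $S_1\times S_2\in\mathbf{LBOR}\vee\mathbf{RBOR}$ and so does its subalgebra $S$. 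This gives $\mathbf{BOR}\subseteq \mathbf{LBOR}\vee\mathbf{RBOR}$, completing the proof.

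There is essentially no obstacle to overcome here, because Theorem \ref{band orthoring lrc} has already performed the substantive construction. The only point worth taking care with is the observation that a spined product inherits membership in a variety from its two factors through the subdirect embedding into their direct product, which is a routine fact from universal algebra (see \cite{McKenzie}). Hence the proof is a one-line corollary of the preceding structure theorem.
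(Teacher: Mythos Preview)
Your proposal is correct and matches the paper's own approach: the paper states that the result follows immediately from Theorem~\ref{band orthoring lrc}, and your argument spells out exactly this deduction, using the spined-product decomposition to get the nontrivial inclusion and the containments $\mathbf{LBI},\mathbf{RBI}\subseteq\mathbf{BI}$ for the trivial one.
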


Now we characterize the interval $[\,\mathbf{Ri}, \mathbf{BOR}]$. First we have the following result which follows from the Corollary 3.2 \cite{pastijn-guo}.
\begin{lemma} \label{subvariety}
Let $\mathcal{U}$ be any subvariety of $\mathbf{BI}$. Then $\mathcal{U}\vee \mathbf{Ri}$ is the subvariety of all the orthorings $S$ for which $E^+(S)\in \mathcal{U}$.
\end{lemma}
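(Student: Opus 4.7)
The plan is to obtain this lemma as a direct specialization of Corollary 3.2 in \cite{pastijn-guo}, which describes the interval $[\mathbf{Ri},\mathbf{O}]$ inside the lattice of varieties of semirings. That corollary asserts that for every subvariety $\mathcal{V}$ of $\mathbf{I}$, the join $\mathcal{V}\vee\mathbf{Ri}$ is exactly the class of orthorings $S$ satisfying $E^+(S)\in\mathcal{V}$. First I would apply this with $\mathcal{V}=\mathcal{U}$, which is legitimate since $\mathcal{U}\subseteq\mathbf{BI}\subseteq\mathbf{I}$; this identifies $\mathcal{U}\vee\mathbf{Ri}$, computed inside the lattice of varieties of semirings, with the class of orthorings $S$ having $E^+(S)\in\mathcal{U}$.

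Next I would verify that this subvariety actually sits inside $\mathbf{BOR}$, so that the characterization makes sense as a statement about band orthorings. Both generators of the join already lie in $\mathbf{BOR}$: every $S\in\mathcal{U}\subseteq\mathbf{BI}$ is a band semiring, hence a band orthoring with $E^+(S)=S$, while every ring $R$ has $E^+(R)=\{0\}$, a trivial band semiring, so $R\in\mathbf{BOR}$. Hence $\mathcal{U}\vee\mathbf{Ri}\subseteq\mathbf{BOR}$. Conversely, any orthoring $S$ with $E^+(S)\in\mathcal{U}\subseteq\mathbf{BI}$ has, directly from Definition~4.1, the property that $E^+(S)$ is a band semiring, so $S$ itself is a band orthoring. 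Combining the two directions, $\mathcal{U}\vee\mathbf{Ri}$ coincides with the subvariety of $\mathbf{BOR}$ consisting of those $S$ for which $E^+(S)\in\mathcal{U}$.

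The substantive content is absorbed into Corollary 3.2 of \cite{pastijn-guo}; in that result the nontrivial direction constructs, for an orthoring $S$ with $E^+(S)\in\mathcal{V}$, a subdirect representation of $S$ using that $\mathcal{H}^+$ is the least idempotent semiring congruence on $S$ (Theorem~2.1), so $S/\mathcal{H}^+$ is canonically isomorphic to $E^+(S)\in\mathcal{V}$, together with the ring $\mathcal{H}^+$-classes. In the present lemma nothing further needs to be proved beyond this transfer. The only real obstacle is book-keeping: one must check that the class of orthorings $S$ with $E^+(S)\in\mathcal{U}$ is genuinely equational in the $(2,2,1)$-signature $(+,\cdot,{}')$ of completely regular semirings. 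This is routine, since $E^+(S)$ is the image of the definable term $x\mapsto x+x'$, so every identity defining $\mathcal{U}$ can be rewritten as an identity on this term, producing an equational description of the target class.
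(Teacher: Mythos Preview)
Your proposal is correct and follows exactly the same route as the paper: the lemma is obtained as a direct specialization of Corollary~3.2 in \cite{pastijn-guo}, using that $\mathcal{U}\subseteq\mathbf{BI}\subseteq\mathbf{I}$. The paper gives no further argument beyond this citation, so your additional remarks (that the join lies in $\mathbf{BOR}$ and that the class is equational via the term $x\mapsto x+x'$) are elaborations rather than a different approach.
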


The following consequence is useful.
\begin{lemma}  \label{SR }
For any subvariety $\mathcal{U}$ of $\mathbf{BI}$ and any subvariety $\mathcal{V}$ of $[ \mathbf{Ri}, \mathbf{BOR} ]$, $\mathcal{U}= (\mathcal{U}\vee \mathbf{Ri})\cap\mathbf{BI}$ and $\mathcal{V}= (\mathcal{V}\cap \mathbf{BI})\vee \mathbf{Ri}$.
\end{lemma}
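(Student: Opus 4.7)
The plan is to prove the two equalities by the obvious pair of inclusions in each case, relying on Lemma \ref{subvariety} (the Pastijn--Guo style characterization) as the workhorse. Throughout, I view semirings in $\mathbf{BOR}$ as $(2,1)$-algebras via equations $(2.1)$--$(2.5)$, so that varieties are closed under subalgebras in this signature.

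For the first equality $\mathcal{U} = (\mathcal{U} \vee \mathbf{Ri}) \cap \mathbf{BI}$, the inclusion $\subseteq$ is immediate: $\mathcal{U}$ is contained in both $\mathcal{U} \vee \mathbf{Ri}$ and $\mathbf{BI}$. For $\supseteq$, pick $S \in (\mathcal{U} \vee \mathbf{Ri}) \cap \mathbf{BI}$. Since $S \in \mathbf{BI} \subseteq \mathbf{I}$, every element of $S$ is an additive idempotent, hence $S = E^+(S)$. By Lemma \ref{subvariety}, membership $S \in \mathcal{U} \vee \mathbf{Ri}$ gives that $S$ is an orthoring with $E^+(S) \in \mathcal{U}$, so $S = E^+(S) \in \mathcal{U}$.

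For the second equality $\mathcal{V} = (\mathcal{V} \cap \mathbf{BI}) \vee \mathbf{Ri}$, the inclusion $\supseteq$ uses only that $\mathcal{V} \cap \mathbf{BI} \subseteq \mathcal{V}$ and that $\mathbf{Ri} \subseteq \mathcal{V}$ (because $\mathcal{V}$ lies in the interval $[\mathbf{Ri}, \mathbf{BOR}]$), so the join also lies in $\mathcal{V}$. For the opposite inclusion, take $S \in \mathcal{V} \subseteq \mathbf{BOR}$. The key observation is that $E^+(S)$ is a subalgebra of $S$ in the $(2,1)$-signature: it is a subsemiring because $S$ is an orthoring, and it is closed under $'$ because for $e \in E^+(S)$ one has $e^{o} = e$, which forces $e' = e$ by uniqueness of the group inverse in the $\hp$-class of $e$. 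Hence $E^+(S) \in \mathcal{V}$, and since $S$ is a band orthoring, $E^+(S) \in \mathbf{BI}$. Thus $E^+(S) \in \mathcal{V} \cap \mathbf{BI}$, and applying Lemma \ref{subvariety} to the subvariety $\mathcal{V} \cap \mathbf{BI}$ of $\mathbf{BI}$ yields $S \in (\mathcal{V} \cap \mathbf{BI}) \vee \mathbf{Ri}$.

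The only step that is not formal nonsense is verifying that $E^+(S)$ really is a subalgebra in the enriched signature — without the closure under $'$, one could not invoke varietal closure to conclude $E^+(S) \in \mathcal{V}$. Once that point is recorded, both halves of the lemma reduce to direct applications of Lemma \ref{subvariety} together with the tautology $E^+(B) = B$ for $B \in \mathbf{BI}$.
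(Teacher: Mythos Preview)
Your proof is correct and follows essentially the same route as the paper's, using Lemma~\ref{subvariety} as the workhorse for both equalities. Your explicit check that $E^+(S)$ is closed under $'$ (hence a subalgebra in the $(2,1)$-signature) is a point the paper glosses over, and your uniform second application of Lemma~\ref{subvariety} is marginally cleaner than the paper's direct appeal to Theorem~3.3 of \cite{pastijn-guo}, but the underlying idea is the same.
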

\begin{proof}
Clearly $\mathcal{U}\subseteq (\mathcal{U} \vee\mathbf{Ri})\cap\mathbf{BI}$. On the other hand if $S\in (\mathcal{U}\vee \mathbf{Ri})\cap\mathbf{BI}$, then $S\in \mathbf{BI}$ and $S$ is a homomorphic image of a subdirect product of a semiring in $\mathcal{U}$ and a ring. Then by Lemma \ref{subvariety}, there is an orthoring $T$ with $E^+(T)\in \mathcal{U}$ and a homomorphism $\phi: T\rightarrow S$. Again by Lallements lemma, $\phi(E^+(T))=E^+(S)$ and hence $S=E^+(S)\in \mathcal{U}$. Thus $(\mathcal{U}\vee \mathbf{Ri})\cap\mathbf{BI} \subseteq \mathcal{U}$.

Since $\mathcal{V}\cap\mathbf{BI}, \mathbf{Ri} \subseteq \mathcal{V}$, we have $(\mathcal{V}\cap \mathbf{BI})\vee \mathbf{Ri}\subseteq \mathcal{V}$. Now suppose that $S\in \mathcal{V} \subseteq [\mathbf{Ri}, \mathbf{BOR}]$. Then $S$ is a band orthoring. So by Theorem 3.3 \cite{pastijn-guo}, $S$ is a homomorphic image of a subdirect product of a ring $R$ and $E^+(S)$. Again since $E^+(S)$ is a subsemiring of $S$ and $S\in \mathcal{V}$, $E^+(S)\in \mathcal{V}$. Therefore $S\in (\mathcal{V}\cap \mathbf{BI})\vee \mathbf{Ri}$.
\end{proof}

Define two mappings $\phi: [\mathbf{Ri}, \mathbf{BOR}]\rightarrow \mathcal{L}(\mathbf{BI})$ such that $\phi(\mathcal{V})=\mathcal{V}\cap\mathbf{BI}$ and $\psi: \mathcal{L}(\mathbf{BI})\rightarrow [\mathbf{Ri}, \mathbf{BOR}]$ such that $\psi(\mathcal{U})=\mathcal{U}\vee \mathbf{Ri}$. Then by Lemma \ref{SR }, $\phi$ and $\psi$ are both one-one and onto mappings such that $\phi o \psi=\iota=\psi o \phi$. Thus we have the following theorem:

\begin{theorem}
The interval $[\,\mathbf{Ri}, \mathbf{BOR}]$ is isomorphic to $\mathcal{L}(\mathbf{BI})$.
\end{theorem}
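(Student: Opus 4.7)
The plan is to show that the two mappings $\phi$ and $\psi$ already introduced just before the theorem are mutually inverse lattice isomorphisms. The heavy lifting has been done by Lemma \ref{SR }, which gives the two identities $\phi\circ\psi=\iota$ and $\psi\circ\phi=\iota$, so the only remaining points are well-definedness and monotonicity.

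First I would check that both mappings are well-defined on the stated domains. For $\mathcal{V}\in[\mathbf{Ri},\mathbf{BOR}]$, the intersection $\mathcal{V}\cap\mathbf{BI}$ is a subvariety of $\mathbf{BI}$, so $\phi(\mathcal{V})\in\mathcal{L}(\mathbf{BI})$. For $\mathcal{U}\in\mathcal{L}(\mathbf{BI})$, we have $\mathbf{Ri}\subseteq\mathcal{U}\vee\mathbf{Ri}$, and by Lemma \ref{subvariety} the variety $\mathcal{U}\vee\mathbf{Ri}$ consists of precisely those orthorings $S$ with $E^+(S)\in\mathcal{U}\subseteq\mathbf{BI}$; hence $\mathcal{U}\vee\mathbf{Ri}\subseteq\mathbf{BOR}$, placing $\psi(\mathcal{U})$ in $[\mathbf{Ri},\mathbf{BOR}]$.

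Next, I would quote Lemma \ref{SR } to conclude that $\phi$ and $\psi$ are mutually inverse bijections. Finally, I would verify that both are order-preserving: if $\mathcal{V}_1\subseteq\mathcal{V}_2$ in $[\mathbf{Ri},\mathbf{BOR}]$, then $\mathcal{V}_1\cap\mathbf{BI}\subseteq\mathcal{V}_2\cap\mathbf{BI}$, so $\phi$ is monotone; dually, if $\mathcal{U}_1\subseteq\mathcal{U}_2$ in $\mathcal{L}(\mathbf{BI})$, then $\mathcal{U}_1\vee\mathbf{Ri}\subseteq\mathcal{U}_2\vee\mathbf{Ri}$, so $\psi$ is monotone. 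A mutually inverse pair of order-preserving bijections between two lattices is automatically a lattice isomorphism (joins and meets, being defined purely order-theoretically as least upper bounds and greatest lower bounds, are preserved by any order-isomorphism of lattices).

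There is no real obstacle left: the substantive content is already encapsulated in Lemma \ref{subvariety} (which is credited to Pastijn–Guo) and Lemma \ref{SR }. The step that requires the most care is making sure the well-definedness uses Lemma \ref{subvariety} to control $\mathcal{U}\vee\mathbf{Ri}$, so that the image of $\psi$ really lies in $[\mathbf{Ri},\mathbf{BOR}]$ rather than merely in the lattice of all varieties of completely regular semirings; everything else is a one-line verification.
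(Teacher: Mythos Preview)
Your proposal is correct and follows the same approach as the paper: define the maps $\phi(\mathcal V)=\mathcal V\cap\mathbf{BI}$ and $\psi(\mathcal U)=\mathcal U\vee\mathbf{Ri}$, invoke Lemma \ref{SR } to see they are mutually inverse, and conclude. In fact you are more careful than the paper, which simply records that $\phi$ and $\psi$ are mutually inverse bijections and declares the theorem, whereas you explicitly check well-definedness of the codomains and order-preservation to justify that the bijection is a lattice isomorphism.
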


The following result can be proved similarly.

\begin{theorem}
The interval $[\,\mathbf{Ri}, \mathbf{LBOR}]$ is isomorphic to $\mathcal{L}(\mathbf{LBI})$ and the interval $[\,\mathbf{Ri}, \mathbf{RBOR}]$ is isomorphic to $\mathcal{L}(\mathbf{RBI})$.
\end{theorem}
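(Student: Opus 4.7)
The strategy is to mimic the proof of the preceding theorem verbatim, with $\mathbf{LBI}$ and $\mathbf{LBOR}$ taking the roles of $\mathbf{BI}$ and $\mathbf{BOR}$ (and dually for the right case). Concretely, I plan to establish the left analogues of Lemma \ref{subvariety} and Lemma \ref{SR }, define the obvious pair of maps between $[\mathbf{Ri},\mathbf{LBOR}]$ and $\mathcal{L}(\mathbf{LBI})$, and verify they are mutually inverse order-preserving bijections.

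The first substantive step is the observation that if $\mathcal{U}$ is a subvariety of $\mathbf{LBI}$, then $\mathcal{U}\subseteq\mathbf{BI}$, so by Lemma \ref{subvariety} the variety $\mathcal{U}\vee\mathbf{Ri}$ consists exactly of the orthorings $S$ with $E^{+}(S)\in\mathcal{U}$. Since $\mathcal{U}\subseteq\mathbf{LBI}$, every such $S$ has $E^{+}(S)\in\mathbf{LBI}$ and is therefore a left band orthoring, giving $\mathcal{U}\vee\mathbf{Ri}\in[\mathbf{Ri},\mathbf{LBOR}]$. Next I would prove the two identities $\mathcal{U}=(\mathcal{U}\vee\mathbf{Ri})\cap\mathbf{LBI}$ for $\mathcal{U}\subseteq\mathbf{LBI}$ and $\mathcal{V}=(\mathcal{V}\cap\mathbf{LBI})\vee\mathbf{Ri}$ for $\mathcal{V}\in[\mathbf{Ri},\mathbf{LBOR}]$. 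The first is immediate from Lemma \ref{SR } via $(\mathcal{U}\vee\mathbf{Ri})\cap\mathbf{LBI}\subseteq(\mathcal{U}\vee\mathbf{Ri})\cap\mathbf{BI}=\mathcal{U}$, with the reverse inclusion trivial. For the second I would repeat the second half of the proof of Lemma \ref{SR }, using that every $S\in\mathcal{V}$ is a left band orthoring, so by Theorem 3.3 of \cite{pastijn-guo} it is a homomorphic image of a subdirect product of a ring and $E^{+}(S)$, and $E^{+}(S)\in\mathcal{V}\cap\mathbf{LBI}$, which places $S$ in $(\mathcal{V}\cap\mathbf{LBI})\vee\mathbf{Ri}$.

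With these identities in hand, the maps $\phi:[\mathbf{Ri},\mathbf{LBOR}]\to\mathcal{L}(\mathbf{LBI})$ and $\psi:\mathcal{L}(\mathbf{LBI})\to[\mathbf{Ri},\mathbf{LBOR}]$ defined by $\phi(\mathcal{V})=\mathcal{V}\cap\mathbf{LBI}$ and $\psi(\mathcal{U})=\mathcal{U}\vee\mathbf{Ri}$ are mutually inverse, and each is clearly order-preserving, so both are lattice isomorphisms. The statement about $\mathbf{RBOR}$ and $\mathcal{L}(\mathbf{RBI})$ is proved by the dual argument, replacing $\mathbf{LBI}$ and $\mathbf{LBOR}$ throughout. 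The only step that requires genuine checking rather than formal bookkeeping is the claim that $\psi(\mathcal{U})\subseteq\mathbf{LBOR}$ whenever $\mathcal{U}\subseteq\mathbf{LBI}$; this is exactly where the definition of a left band orthoring must be combined with Lemma \ref{subvariety}. Once that point is secured, everything else is a mechanical transcription of the preceding theorem's proof.
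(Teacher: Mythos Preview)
Your proposal is correct and matches the paper's intent exactly: the paper offers no separate proof, stating only that the result ``can be proved similarly'' to the preceding theorem about $[\mathbf{Ri},\mathbf{BOR}]\cong\mathcal{L}(\mathbf{BI})$. Your sketch spells out that similarity carefully, and the one nontrivial point you flag---that $\psi(\mathcal{U})\subseteq\mathbf{LBOR}$ follows from Lemma~\ref{subvariety} together with $\mathbf{LBI}\subseteq\mathbf{BI}$---is handled correctly.
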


\bibliographystyle{amsplain}

\end{document}